\def\subsection{\@startsection{subsection}{2}%
  \z@{.8\linespacing\@plus.7\linespacing}{.5\linespacing}%
  {\normalfont\bfseries}}
\def\@seccntformat#1{%
  \protect\textup{%
    \protect\@secnumfont
    \expandafter\protect\csname format#1\endcsname 
    \csname the#1\endcsname
    \protect\@secnumpunct
  }%
}
\def\l@section{\@tocline{1}{5pt}{0pc}{}{}}
\renewcommand{\tocsection}[3]{%
	\indentlabel{\@ifnotempty{#2}{\makebox[20pt][l]{%
				\ignorespaces#1 #2.\hfill}}}\sc #3\dotfill}
\newdimen{\tocsubsecmarg}
\def\l@subsection{\@tocline{2}{3pt}{0pc}{\tocsubsecmarg}{}}
\renewcommand{\tocsubsection}[3]{%
	\indentlabel{\@ifnotempty{#2}{\makebox[30pt][l]{%
				\ignorespaces#1 #2.\hfill}}}#3\dotfill}
\let\oldtocsubsection=\tocsubsection
\renewcommand{\tocsubsection}[2]{\hspace{3em} \oldtocsubsection{#1}{#2}}
\numberwithin{equation}{section}
\theoremstyle{plain}
\newtheorem{lemma}[equation]{Lemma}
\crefname{prop}{Proposition}{Propositions}
\newtheorem{prop}[equation]{Proposition}
\newtheorem{theorem}[equation]{Theorem}
\crefname{obs}{Observation}{Observations}
\newtheorem{obs}[equation]{Observation}
\crefname{cor}{Corollary}{Corollaries}
\newtheorem{cor}[equation]{Corollary}
\crefname{fact}{Fact}{Facts}
\def\@empty{}
\def\ifemptycredit#1{%
	\def\tmp{#1}%
	\ifx\tmp\@empty%
	\else%
	{~(#1)}%
	\fi%
}
\newenvironment{namedthm*}[2][]{
\medskip\par\noindent \textbf{#2}\ifemptycredit{#1}\textbf{.}\itshape\xspace
}{\medskip}
\theoremstyle{definition}
\crefname{defn}{Definition}{Definitions}
\newtheorem{defn}[equation]{Definition}
\crefname{example}{Example}{Examples}
\crefname{question}{Question}{Question}
\newtheorem{question}[equation]{Question}
\theoremstyle{remark}
\newtheorem*{acknowledge}{Acknowledgements}
\crefname{remark}{Remark}{Remarks}
\newtheorem{remark}[equation]{Remark}
\crefname{claim}{Claim}{Claims}
\newtheorem{claim}{Claim}
\newtheorem*{claim*}{Claim}
\crefname{assumption}{Assumption}{Assumptions}
\newtheorem{assumption}[equation]{Assumption}
\declaretheoremstyle[
spaceabove=\topsep, 
spacebelow=6pt,
headfont=\normalfont\itshape,
notefont=\normalfont, notebraces={(}{)},
bodyfont=\normalfont,
postheadspace=4pt,
qed=\mbox{\smaller[4]$\boxtimes$}
]{claimproofstyle}
\declaretheorem[name={Proof of Claim}, style=claimproofstyle, unnumbered]{pf}
\let\@\mathcal
\let\dfn\textbf
\newcommand{\F}{\mathbb{F}}
\newcommand{\N}{\mathbb{N}}
\renewcommand{\P}{\mathbb{P}}
\newcommand{\Q}{\mathbb{Q}}
\newcommand{\R}{\mathbb{R}}
\newcommand{\Z}{\mathbb{Z}}
\newcommand{\1}{\mathbb{1}}
\newcommand{\m}{\mathfrak{m}}
\newcommand{\coc}{\rho}
\newcommand{\finS}{J}
\newcommand{\tree}{\tau}
\newcommand{\Symb}{S}
\newcommand{\shift}{\sigma}
\newcommand{\0}{\mathbb{\emptyset}}
\newcommand{\e}{\varepsilon}
\renewcommand{\phi}{\varphi}
\newcommand{\partialto}{\rightharpoonup}
\newcommand{\conc}{{^\smallfrown}}
\newcommand{\actson}{\curvearrowright}
\newcommand*{\defeq}{\mathrel{\vcenter{\baselineskip0.5ex \lineskiplimit0pt \hbox{\scriptsize.}\hbox{\scriptsize.}}}=}
\newcommand*{\defequiv}{\mathrel{\vcenter{\baselineskip0.5ex \lineskiplimit0pt \hbox{\scriptsize.}\hbox{\scriptsize.}}}\Leftrightarrow~}
\newcommand*{\defequivlong}{\mathrel{\vcenter{\baselineskip0.5ex \lineskiplimit0pt \hbox{\scriptsize.}\hbox{\scriptsize.}}}\Longleftrightarrow~}
\newcommand{\dom}{\mathrm{dom}}
\newcommand{\im}{\mathrm{im}}
\newcommand{\graph}{\mathrm{graph}}
\newcommand{\set}[1]{\left\{ #1 \right\}}
\newcommand{\gen}[1]{\langle #1 \rangle}
\newcommand{\norm}[1]{\left\| #1 \right\|}
\newcommand{\rest}[1]{\mathord{\downharpoonright_{#1}}}
\newcommand{\eqcomment}[1]{\Big[\text{#1}\Big] \hspace{6pt}}
\definecolor{purple}{RGB}{116,0,159}
\definecolor{vert}{RGB}{7,126,26}
\newcommand{\Folner}{F{\o}lner\xspace}
\newcommand{\fsup}[1][f]{{#1}^\star}
\newcommand{\finf}[1][f]{{#1}_\star}
\newcommand{\finv}[1][f]{\overline{#1}}
\newcommand{\backop}[1][T]{P_{#1}}
\title{A backward ergodic theorem along trees \\ and its consequences for free group actions}
\author{Anush Tserunyan}
\address[Anush Tserunyan]{Mathematics and Statistics, McGill University, Montréal, QC, Canada}
\email{anush.tserunyan@mcgill.ca}
\author{Jenna Zomback}
\address[Jenna Zomback]{Mathematics, Amherst College, Amherst, MA, USA}
\email{jzomback@amherst.edu}
\thanks{The authors' research was partially supported by NSF Grant DMS-1855648 and the first author was also partially supported by NSERC Discovery Grant RGPIN-2020-07120.}
\keywords{pointwise ergodic, pmp transformation, trees, free group, pmp action, boundary, Markov measure, Radon-Nikodym cocycle}
\subjclass{Primary 37A30, 37A20, 03E15}
\date{\today}
\begin{document}

\begin{abstract}
We prove a new pointwise ergodic theorem for probability-measure-preserving (pmp) actions of free groups, where the ergodic averages are taken over arbitrary finite subtrees of the standard Cayley graph rooted at the identity. This result is a significant strengthening of a theorem of Grigorchuk (1987) and Nevo and Stein (1994), and a version of it was conjectured by Bufetov in 2002. 

Our theorem for free groups arises from a new -- backward -- ergodic theorem for a countable-to-one pmp transformation, where the averages are taken over arbitrary trees of finite height in the backward orbit of the point (i.e.\ trees of possible pasts). We also discuss other applications of this backward theorem, in particular to the shift map with Markov measures, which yields a pointwise ergodic theorem along trees for the boundary actions of free groups.
\end{abstract}

\maketitle

\vspace{-.7cm}

\tableofcontents

\section{Introduction}

The classical pointwise ergodic theorem, whose first instance dates back to Birkhoff \cite{Bir}, states that for any \dfn{measure-preserving} (i.e., $T_* \mu = \mu$) transformation $T : X \to X$ on a standard probability space $(X,\mu)$ 
%
and $f \in L^1(X,\mu)$, $\lim_{n \to \infty} \frac{1}{n+1} \sum_{k=0}^{n} f(T^k x) = \finv$ for a.e.\ $x \in X$, where $\finv$ is the conditional expectation of $f$ with respect to the $\sigma$-algebra of $T$-invariant Borel sets.

More generally, pointwise ergodic theorems have been proven for probability-measure-preserving (pmp) actions of countable (semi)groups perhaps with weighted ergodic averages (see \cref{subsec:intro:history} for a survey). The first such results for nonamenable groups are due to Grigorchuk \cite{Gri87,Gri99,Gri00}, Nevo \cite{Nevo}, and Nevo and Stein \cite{NS}, which apply to pmp actions of finitely generated free groups $\F_r$ (see \cref{forward_erg:triangles:simple}). 
These results state the convergence (to the conditional expectation) of weighted ergodic averages taken over the balls in the standard Cayley graph of $\F_r$, where the weights are uniform over each sphere.
This was later generalized by Bufetov \cite[Theorem 1]{Buf2} to finitely generated free semigroups and to a larger class of weight assignments.

\begin{wrapfigure}[12]{r}[1cm]{.3\textwidth}
\centering
    \includegraphics[height=4cm]{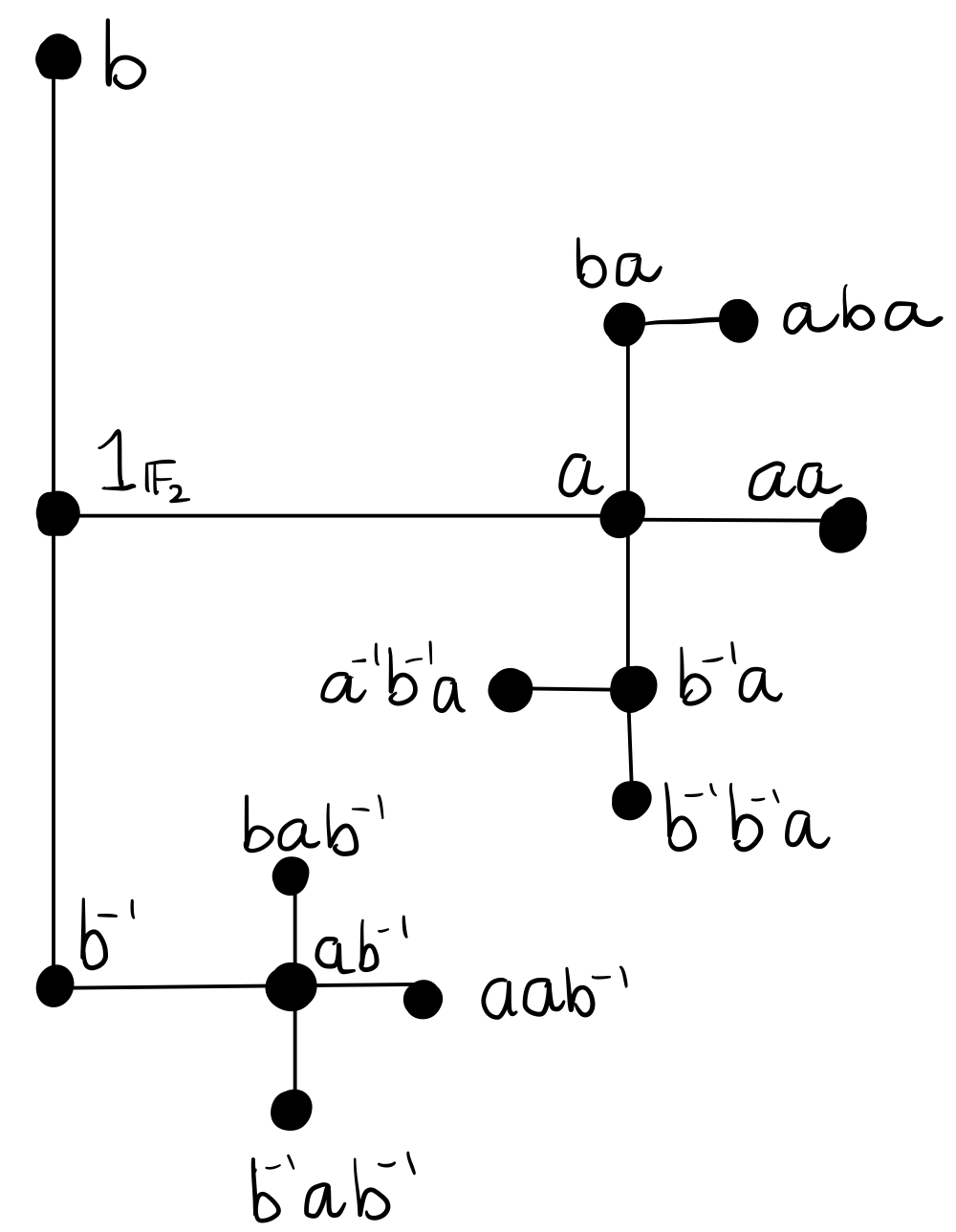}
    \caption{Example of a subtree $\tree$ \\ for $\F_2=\langle a,b \rangle$}
    \label{fig:tree_groups}
\end{wrapfigure}

\subsection{Results}\label{subsec:intro:results}

We prove a pointwise ergodic theorem for pmp actions of free groups where the ergodic averages are taken over \textbf{arbitrary finite subtrees} of the standard (left) Cayley graph; see \cref{fig:tree_groups} for an example of such a tree.
A version of this was conjectured by Bufetov in 2002\footnote{Bufetov has kindly allowed us to state this, and he has also informed us that he asked closely related questions at ESI, Vienna in 2002.}, and it vastly strengthens the aforementioned theorem of Grigorchuk, Nevo, and Nevo--Stein. 

\begin{theorem}[Pointwise ergodic for pmp actions of free groups]\label{intro:forward_erg:trees} 
Let $\F_r$ be the free group on $2 \le r < \infty$ generators and let $\F_r \actson^\alpha (X,\mu)$ be a (not necessarily free) pmp action of $\F_r$. Let $\Symb_r$ be the standard symmetric set of generators of $\F_r$ and let $\m_u$ be the uniform\footnote{
\label{ftnt:nonbacktracking-rw}$\m$ is the nonbacktracking simple symmetric random walk on the standard (left) Cayley graph of $\F_r$, i.e.\ $\m_u(w) \defeq \frac{1}{2r (2r-1)^{\ell-1}}$ for each reduced word $w \in \F_r$ of length $\ell \ge 1$, and $\m_u(1_{\F_r}) \defeq 1$.
} Markov measure on $\F_r$. 
Then for every $f \in L^1(X,\mu)$, for a.e.\ $x \in X$, for all sequences $(\tree_n)$ of finite subtrees of the (left) Cayley graph of $\F_r$ containing the identity (see \cref{fig:tree_groups}) such that $\lim_n \m_u(\tree_n) = \infty$, we have
\[
\lim_{n \to \infty} \frac{1}{\m_u(\tree_n)}\sum_{w\in \tree_n} f(w\cdot x) \m_u(w) = \finv(x),
\]
where $\finv$ is the conditional expectation of $f$ with respect to the $\sigma$-algebra of $\alpha$-invariant Borel sets.
\end{theorem}

A more general version of this theorem for a wider class of Markov measures is stated later as \cref{forward_erg:trees}.
Taking $\tree_n$ to be the ball of radius $n$ in $\F_r$ gives the conclusion of \cite{Gri87,Gri99,Gri00,NS}.

We also prove a pointwise ergodic theorem for the (non-pmp) action of the free group $\F_r$ on $1 \le r \le \infty$ generators on its boundary $\partial \F_r$, where we identify $\partial \F_r$ with the space of infinite reduced words on the standard symmetric set of generators of $\F_r$.
We denote by $u \conc v$ the concatenation of the words $u,v \in \F_r$.

\begin{theorem}[Pointwise ergodic for boundary actions of free groups]\label{intro:forward_erg:boundary}
For $1 \le r \le \infty$, let $\Symb_r$ be the standard symmetric set of generators of the free group $\F_r$, and let $\F_r \actson^\beta \partial \F_r$ be the natural action of $\F_r$ on its boundary $\partial \F_r \subseteq \Symb_r^\N$. Let $\m$ be a stationary Markov measure on the set of finite words in $\Symb_r$ whose support is contained in $\F_r$ (the set of reduced words)\cref{ftnt:nonbacktracking-rw} and let $\P_\m$ denote the induced Markov (probability) measure on $\partial \F_r$. For every $f \in L^1(\partial\F_r,\P_\m)$, for a.e.\ $x \in \partial \F_r$, for all sequences $(\tree_n)$ of finite subtrees of the (left) Cayley graph of $\F_r$ containing the identity but not $x(0)^{-1}$ (see \cref{fig:tree_groups} for $x(0) = a$) such that $\lim_n \m_u(\tree_n) = \infty$, we have
\[
\lim_{n \to \infty} \frac{1}{\m(\tree_n \conc x(0))} \sum_{w \in \tree_n}f(w \cdot x)\m(w \conc x(0)) = \finv(x),
\]
where $\tree_n \conc x(0) \defeq \set{w \conc x(0) : w \in \tree_n}$ and $\finv$ is the conditional expectation of $f$ with respect to the $\sigma$-algebra of $\beta$-invariant Borel sets.
\end{theorem}

We restate this theorem later as \cref{forward_erg:boundary}, which also includes the corresponding version of the maximal ergodic theorem, as well as convergence in $L^p$ for a specific sequence of trees. 
Bowen and Nevo in \cite[Theorem 4.1]{BN1} also provide a pointwise ergodic theorem for the boundary action of the free group, more precisely, for the diagonal action of the free group on the product of its boundary and a pmp action, and their averages are taken over horospherical balls. 
We also prove an ergodic theorem for this diagonal action (see \cref{sec:pmp_actions}) but we sample the averages over subtrees of the Cayley graph of $\F_r$ as in \cref{intro:forward_erg:boundary}.


The main result underlying \cref{intro:forward_erg:trees} and \cref{intro:forward_erg:boundary} is a backward pointwise ergodic theorem for a pmp Borel transformation $T$ (\cref{intro:backward_erg:trees}, restated later as \cref{backward_erg:trees}), where the averages are taken along trees of possible pasts (in the direction of $T^{-1}$). Although $T$ is pmp, the induced orbit equivalence relation $E_T$ is not pmp, unless $T$ is one-to-one, so the averages are weighted by the Radon--Nikodym cocycle of $E_T$ with respect to the measure.

\begin{theorem}[Backward pointwise ergodic along trees]\label{intro:backward_erg:trees}
Let $T$ be an aperiodic\footnote{$T$ is called \dfn{aperiodic} if for all $x\in X$ and $n\in \N\setminus\set{0}$, $T^n(x)\neq x$.} countable-to-one measure-preserving transformation on a standard probability space $(X,\mu)$. Let $E_T$ denote the induced orbit equivalence relation and let $(x,y) \to \coc_x(y) : E_T \to \R^+$ be the Radon--Nikodym cocycle of $E_T$ with respect to $\mu$. For every $f \in L^1(X,\mu)$, for a.e.\ $x \in X$,
\[
\frac{1}{\coc_x(\tree_x)}\sum_{y \in \tree_x} f(y) \coc_x(y) \to \finv(x) \;\text{ as }\; \coc_x(\tree_x)\to \infty,
\]
where $\tree_x$ ranges over all (possibly infinite) subtrees of the graph of $T$ of finite height rooted at $x$, directed towards $x$ (see \cref{fig:tree}), and where $\coc_x(\tree_x)\defeq\sum_{y\in \tree_x}\coc_x(y)$ and $\finv$ is the conditional expectation of $f$ with respect to the $\sigma$-algebra of $T$-invariant Borel sets. 
\end{theorem}

\begin{figure}[H]
    \centering
    \includegraphics[height=4cm]{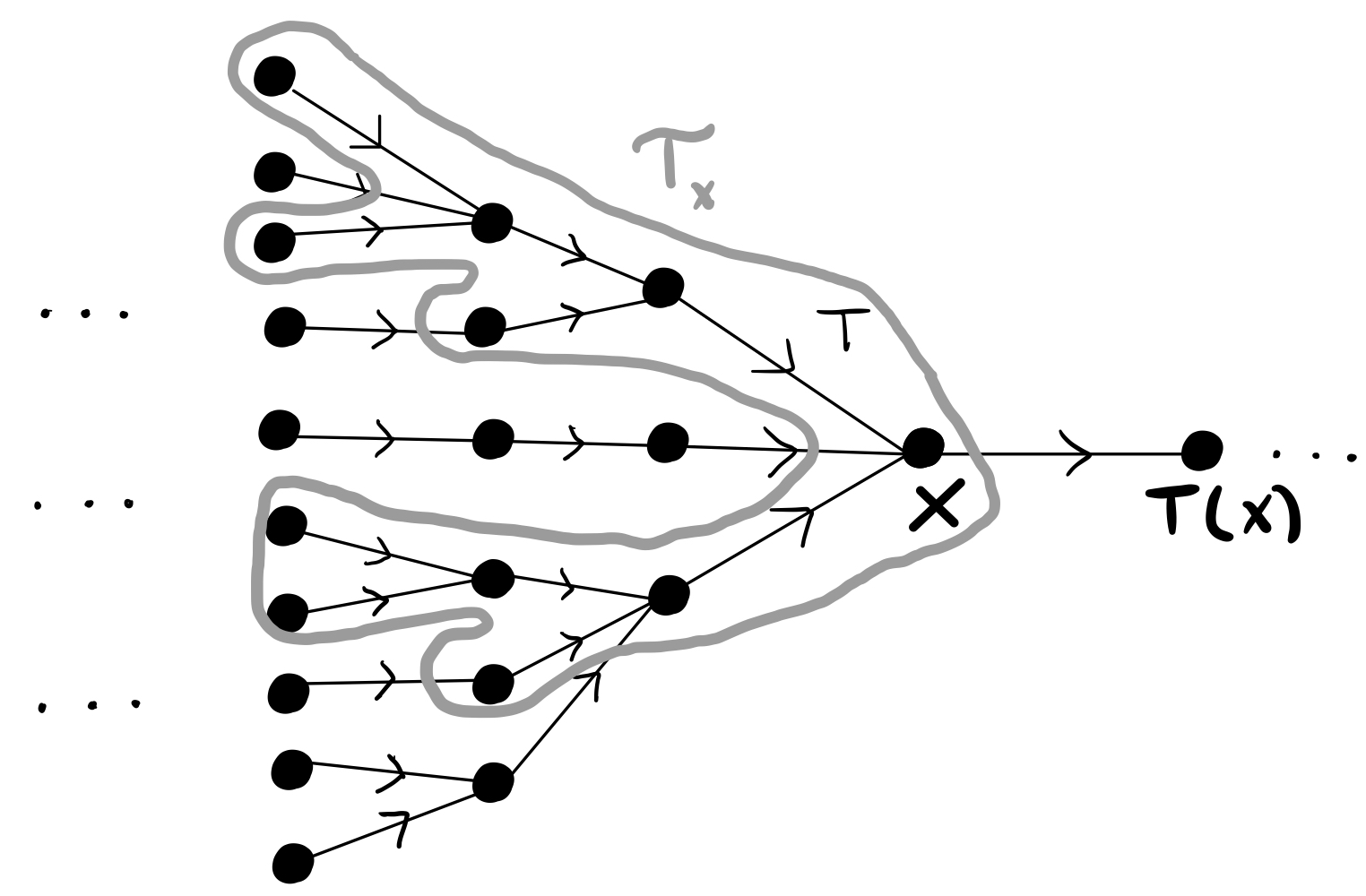}
    \caption{A partially drawn graph of $T$ with an example of $\tree_x$ circled}
    \label{fig:tree}
\end{figure}

Thus, while the classical pointwise ergodic theorem for $T$ says that to approximate $\finv$, we can start almost anywhere in the space and walk forward in time (in the direction of $T$), \cref{intro:backward_erg:trees} allows us to walk back in time (in the direction of $T^{-1}$) scanning sufficiently heavy trees of possible pasts. Note that \cref{intro:backward_erg:trees}, in particular, implies the classical (forward) pointwise ergodic theorem for one-to-one transformations $T$ when applied to $T^{-1}$. 

We also prove a \textbf{backward maximal ergodic theorem} over arbitrary subtrees (see \cref{maximal_ergodic}), but we do not use it in our proof of \cref{intro:backward_erg:trees}.

We obtain \cref{intro:forward_erg:trees} and \cref{intro:forward_erg:boundary} by applying \cref{intro:backward_erg:trees} to specific choices of $T$. In addition to \cref{intro:forward_erg:trees} and \cref{intro:forward_erg:boundary},
we also explore other applications of \cref{intro:backward_erg:trees} to the shift map on spaces of infinite words in \cref{sec:Markov_chains}. We recall the class of Markov measures on these spaces that are shift-invariant (namely, stationary Markov measures), and point out that for each such measure, \cref{intro:backward_erg:trees} allows us to calculate the expectation of an $L^1$ function by looking at trees of past trajectories of the Markov process.

\cref{intro:backward_erg:trees} implies, in particular, convergence of the $\coc$-weighted averages along any sequence $(\tree_n)$ of subtrees with $\coc(\tree_n) \to \infty$. 

We also obtain convergence in $L^p$, for all $p \ge 1$, for sequences of so-called \emph{fat} trees, see \cref{backward_erg:fat-trees}. The authors do not know whether the averages over \emph{all} trees converge in $L^p$, see \cref{q:all_trees}. An obvious example of a sequence of fat trees is that of complete trees of height $n$, i.e.\ $\bigcup_{i = 0}^n T^{-i}(x)$, and we now state this important special case (later restated as \cref{backward_erg:triangles}).

\begin{cor}[Backward ergodic along complete trees]\label{intro:backward_erg:triangles}
Let $T$ be an aperiodic countable-to-one measure-preserving transformation on a standard probability space $(X,\mu)$. Let $E_T$ denote the induced orbit equivalence relation and let $(x,y) \to \coc_x(y) : E_T \to \R^+$ be the Radon--Nikodym cocycle of $E_T$ with respect to $\mu$. For any $1 \leq p < \infty$ and $f \in L^p(X,\mu)$,
\[
\lim_{n \to \infty} \frac{1}{n+1} \sum_{i = 0}^n \sum_{y \in T^{-i}(x)} f(y) \coc_x(y) = \finv(x) \;\text{ a.e.\ and in $L^p$},
\]
where $\finv$ is the conditional expectation of $f$ with respect to the $\sigma$-algebra of $T$-invariant Borel sets.
\end{cor}

In the case where $f$ is bounded, \cref{intro:backward_erg:triangles} can be deduced directly from the classical (forward) pointwise ergodic theorem, so the new content of \cref{intro:backward_erg:triangles} is for unbounded $f$. 

Unlike \cref{intro:backward_erg:trees}, \cref{intro:backward_erg:triangles} has an operator-theoretic formulation. The operator $\backop : L^1 \to L^1$ defined by $\backop(f)(x) \defeq \sum_{y \in T^{-1}(x)} f(y) \coc_x(y)$ is nothing but the adjoint $K_T^*$ of the Koopman representation $K_T$ of $T$ (\cref{adjoint_calc}), so \cref{intro:backward_erg:triangles} simply states the convergence to $\finv$ of $\frac{1}{n+1} \sum_{i = 0}^n (K_T^*)^i(f)$, while the classical pointwise ergodic theorem states the same but for $K_T$. The mere convergence of $\frac{1}{n+1} \sum_{i = 0}^n (K_T^*)^i(f)$, not specifically to $\finv$, is already implied by \cite{DunSch} (see \cref{remark_operators}), so the contribution of \cref{intro:backward_erg:triangles} is the fact that this limit is $T$-invariant (and hence equal to $\finv$) and not just $K_T^*$-invariant.


\subsection{Context and history}\label{subsec:intro:history}

In general, a measure-preserving action of a countable (discrete) semigroup $G$ on a standard probability space $(X,\mu)$ is said to have the \dfn{pointwise ergodic property} along a sequence $(F_n)$ of finite subsets of $G$, if for every $f \in L^1(X,\mu)$, for a.e.\ $x \in X$,
\[
\lim_{n \to \infty} \frac{1}{|F_n|} \sum_{g \in F_n} f(g \cdot x) = \finv,
\]
where $\finv$ is the conditional expectation of $f$ with respect to the $\sigma$-algebra of $G$-invariant Borel sets, so in case the action is ergodic, the limit is just $\int_X f d\mu$.

\subsubsection{For pmp actions}

It is a celebrated theorem of Lindenstrauss \cite{Lindenstrauss} that the pointwise ergodic property is true for the pmp actions of all countable amenable groups along tempered \Folner sequences and this was extended by Butkevich in \cite{Butkevich} to all countable left-cancellative amenable semigroups.

Amenability, or rather the fact that the \Folner sets $F_n$ are almost invariant, is essential for the pointwise ergodic property as it ensures that the limit of averages is an invariant function. This is why, to obtain a version of the pointwise ergodic property for nonamenable (semi)groups, e.g. for the nonabelian free groups $\F_r$, one has to imitate the almost invariance of finite sets by taking weighted averages instead, so that the weight of the boundary is small. The first instance of this was proven by Grigorchuk \cite{Gri87,Gri99,Gri00}, and independently by Nevo (for $L^2$ functions \cite{Nevo}), and by Nevo and Stein \cite{NS}:

\begin{theorem}[Grigorchuk 1987; Nevo--Stein 1994]
\label{forward_erg:triangles:simple}
Let $r < \infty$ and let $\F_r \actson^\alpha (X,\mu)$ be a (not necessarily free) pmp action of the free group $\F_r$. For any $f \in L^1(X,\mu)$, for a.e.\ $x \in X$,
\[
\lim_{n \to \infty} \frac{1}{n+1} \sum_{w \in B_n} f(w \cdot x) \m_u(w) = \finv(x),
\]
where $B_n$ is the (closed) ball of radius $n$ in the standard symmetric (left) Cayley graph of $\F_r$, $\m_u$ is the uniform Markov measure\cref{ftnt:nonbacktracking-rw} on $\F_r$, and $\finv$ is the conditional expectation of $f$ with respect to the $\sigma$-algebra of $\alpha$-invariant Borel sets.
\end{theorem}

This theorem of Grigorchuk and of Nevo and Stein is a special case of our \cref{intro:forward_erg:trees}. Indeed, one simply applies \cref{intro:forward_erg:trees} to the sequence of balls $B_n$ of radius $n$ (in the standard Cayley graph of $\F_r$), observing that $\m(B_n) = n+1$. 
It is worth noting that our proof of \cref{intro:forward_erg:trees} does not use \cref{forward_erg:triangles:simple}; in fact, it provides a new proof of this result.

\cref{forward_erg:triangles:simple} was vastly generalized by Grigorchuk in \cite{Gri99} and Bufetov in \cite[Theorem 1]{Buf2} to finitely generated free semigroups (and to a large class of stationary Markov measures in Bufetov's theorem).

The aforementioned theorems of Grigorchuk, Nevo--Stein, and Bufetov are proven using the Dunford--Schwartz theorem \cite{DunSch}, an ergodic theorem for Markov operators. Our proof of \cref{intro:forward_erg:trees} follows a similar sort of reduction, but to our \cref{intro:backward_erg:trees} (instead of Dunford--Schwartz) applied to an auxiliary transformation on $X \times \partial\F_r$, which is referred to as the backward system in \cite[Section 1.5]{BQ}. That is, our proof technique is self-contained. Furthermore, the proofs in \cite{Gri87,Gri99,Gri00}, \cite{NS}, and \cite{Buf2} use Markov operators, but \textit{this technique cannot be extended to yield our results with averages over trees}, since trees do not correspond to iterates of operators.

Other instances of pointwise ergodic theorems are known for pmp actions of finitely generated groups where the averages are taken over sets that are not trees. For example, \cite{NS} and \cite{Buf} include that for pmp actions of free groups of finite rank, the averages over spheres of even radius in the Cayley graph converge a.e.\ if the function is in $L^p$ for $p > 1$ (Nevo--Stein) or even in $L \log L$ (Bufetov), however this is not true for functions in $L^1$ in general as shown by Tao \cite{Tao}. A more general treatment of pointwise ergodic theorems for groups is given in \cite{BN1} and \cite[Theorems 6.2 and 6.3]{BN2}. We refer to \cite{BK} for a survey of pointwise ergodic theorems for groups. 

\subsubsection{For nonsingular (null-preserving) actions}

There is a suitable analogue of the pointwise ergodic property for merely nonsingular\footnote{A measurable action of a countable semigroup $G$ on a probability space $(X,\mu)$ is called \dfn{nonsingular} (or \dfn{null-preserving}, or \dfn{quasi-pmp}) if for each $g \in G$, $g_* \mu \sim \mu$; equivalently, the $g$-preimage of a null set is null.} actions of (semi)groups on a standard probability space: the averages have to be weighted by the corresponding Radon--Nikodym cocycle \cite[Section 8]{KM}. Much less is known for such actions: a pointwise ergodic theorem for $\Z^d$ was first proven by Feldman \cite{Feldman:ratio_ergodic} and then generalized in two different directions by Hochman \cite{Hochman:ratio_ergodic_for_Z^d} and by Dooley and Jarrett \cite{Dooley-Jarrett:nonsingular_ergodic_Z^d_along_retangles}. For general groups of polynomial growth, Hochman obtained \cite[Theorem 1.4]{Hochman:ratio_ergodic}, a slightly weaker form of a nonsingular ergodic theorem where the a.e.\ convergence is replaced with a.e.\ convergence in density.

On the negative side, Hochman proved \cite[Theorem 1.1]{Hochman:ratio_ergodic} that the pointwise ergodic theorem for null-preserving actions holds only along sequences of subsets of the group satisfying the so-called Besicovitch covering property. He then infers \cite[Theorems 1.2 and 1.3]{Hochman:ratio_ergodic} that the null-preserving pointwise ergodic theorem fails for any sequence of subsets of $\bigoplus_{n \in \N} \Z$ and any subsequence of balls in nonabelian free groups as well as in the Heisenberg group. 

In \cite{Tserunyan:graph_ergodic}, Tserunyan obtained a pointwise ergodic theorem for locally countable null-preserving Borel graphs (equivalently, for the Schreier graphs of null-preserving actions of countable groups). Given the negative results mentioned above, this is, perhaps, the most general result in this vein.

\subsubsection{Hybrid setting}

Our main result (\cref{intro:backward_erg:trees}) is a contribution to the ergodic theory of actions of finitely generated (semi)groups in both the pmp and null-preserving settings. As mentioned above, although \cref{intro:backward_erg:trees} is about a pmp transformation $T$, its orbit equivalence relation $E_T$ is generally not pmp, and assuming, as we may, that $E_T$ is null-preserving, the ergodic averages are weighted with the corresponding Radon--Nikodym cocycle. Furthermore, \cref{intro:backward_erg:trees} implies an ergodic theorem for the natural action of the free group $\F_r$ on its boundary (\cref{intro:forward_erg:boundary}), which is merely null-preserving. On the other hand, \cref{intro:backward_erg:trees} also implies \cref{intro:forward_erg:trees}, which is about pmp actions.


\subsection{A word on the proof of the backward ergodic theorem}\label{subsec:intro:proof}

The pointwise ergodic property equates the global condition of ergodicity with the local (pointwise) statistics of the action. The key in connecting the global analysis to the local statistics is that one can often replace integrals of functions with integrals of local averages of these functions over certain shapes. For example, for a pmp group action, it easily follows from the change of variable formula that $\int f\;d\mu=\int \frac{1}{|F|}\sum_{\gamma\in F}f(\gamma\cdot x)\;d\mu(x)$ for any finite subset $F$ of the group. In our case, we have $\int f\;d\mu=\int \frac{1}{\coc_x(\triangleright_T^n \cdot x)} \sum_{y \in \triangleright_T^n \cdot x} f(y) \coc_x(y) \;d\mu(x)$ for $n \in \N$, where $\triangleright_T^n \cdot x \defeq \bigcup_{i \le n} T^{-i}x$ (see \cref{bridge:pmp}). We call such statements \emph{local-global bridges}. In operator-theoretic language, these are assertions that certain local averaging operators are Markov.

Local-global bridges reduce proving pointwise ergodic theorems to solving finitary tiling problems. These reductions are done as follows: we assume for a contradiction that the pointwise ergodic theorem fails, which gives us ``bad" tiles for each point. Then, we tile the shapes from the local-global bridge with bad tiles, thus making the integral of the function over the whole space incorrect, a contradiction. 

This scheme of proving ergodic theorems first appears implicitly in \cite{KP}, more explicitly in \cite{Tserunyan:classical_ergodic}, and even more explicitly in \cite{BZ} and in \cite{Tserunyan:graph_ergodic}.  
In the proof of \cref{intro:backward_erg:trees}, we tile sets of the form $\triangleright_T^n \cdot x$ with tiles of the form $\tree_y$, where $\tree_y$ is an arbitrary subtree of the graph of $T$ of finite height rooted at $y \in \triangleright_T^n \cdot x$ and directed towards $y$ (see \cref{triangle} for when $T$ is the shift map $\shift$ on $2^\N$). 

\begin{figure}[htp]
    \centering
    \includegraphics[width=7cm]{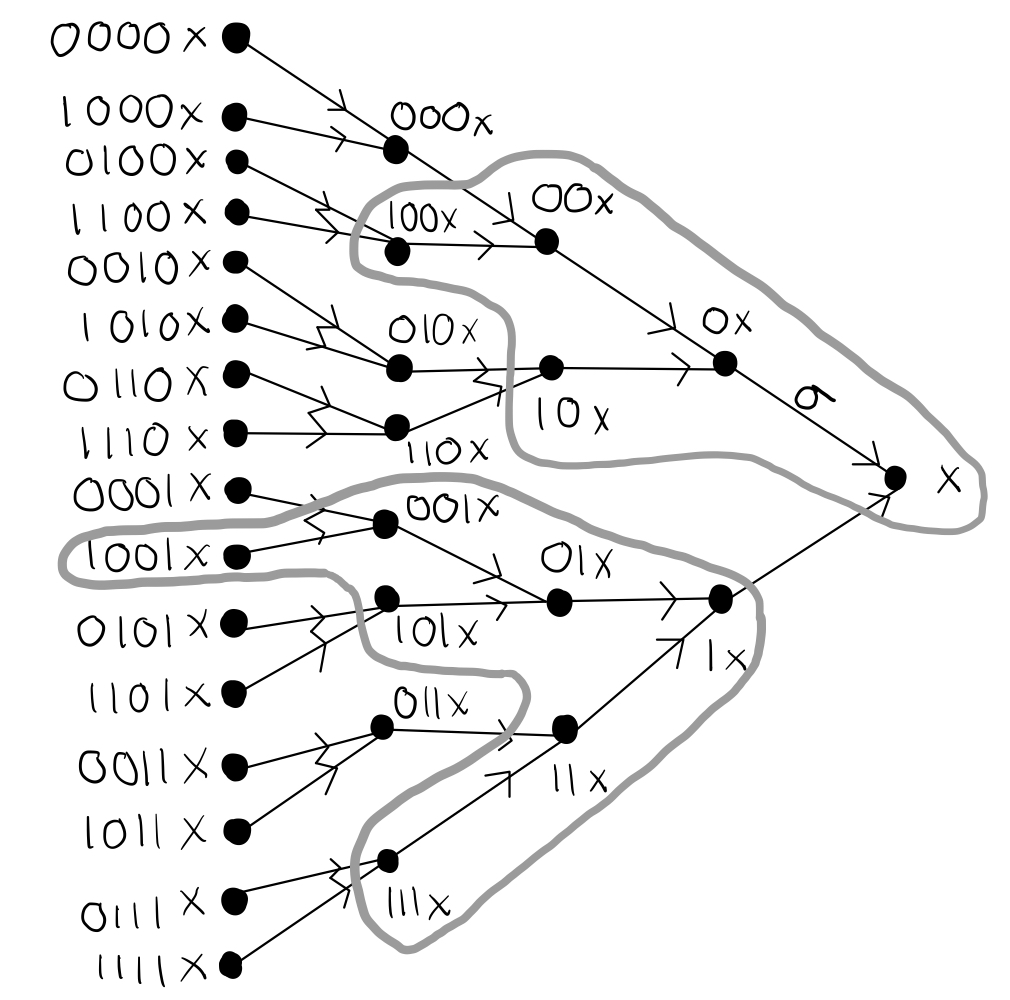}
    \caption{$\triangleright_\shift^4 \cdot x$ with examples of $\tree_x$ and $\tree_{1 x}$ circled}
    \label{triangle}
\end{figure}

For the above scheme to work, we need the limit (or rather, the $\limsup$) of local averages to be invariant (i.e. constant on each orbit).
Unlike the classical ergodic theorem, this is not clear a priori in \cref{intro:backward_erg:trees}, or especially in its special case \cref{intro:backward_erg:triangles}. Comparing the averages over the trees $\tree_x$ and $\tree_x \cup \set{T(x)}$ only gives that the $\limsup$ is nondecreasing in the direction of $T$, so one has to apply Poincar\'{e} recurrence to deduce that the $\limsup$ is constant on the orbit of $x$.



\subsubsection*{Organization}
In \cref{sec:prelim}, we give the necessary notation and definitions that are used throughout the paper, and we prove some preliminary lemmas about countable-to-one pmp Borel transformations $T$. In \cref{sec:bridge}, we state and prove the local-global bridge lemmas (\cref{bridge:null-preserving,bridge:pmp}).
In \cref{sec:theorem}, we explicitly state and prove the suitable tiling property (\cref{tiling2}) and deduce \cref{intro:backward_erg:trees} from it.
In \cref{sec:Markov_chains}, we provide examples of countable-to-one pmp ergodic Borel transformations (to which \cref{intro:backward_erg:trees} applies), and deduce the corresponding ergodic theorems, in particular \cref{intro:forward_erg:boundary} for the boundary action of the free groups.
In \cref{sec:pmp_actions}, we deduce \cref{intro:forward_erg:trees} for pmp actions of free groups.

\begin{acknowledge}
The authors thank Gabor Elek for encouraging them to investigate the pointwise ergodic property for the boundary action of the free groups. Also, thanks to Florin Boca, Clinton Conley, and Grigory Terlov for helpful discussions, and to Alexander Kechris and Aristotelis Panagiotopoulos for inviting the second author to speak in the Caltech seminar, which sparked many of the questions addressed in the paper. 
We thank Anton Bernshteyn for pointing out that \cref{intro:backward_erg:triangles} for bounded functions follows from the classical pointwise ergodic theorem (for one pmp transformation).
We thank Marius Junge for encouraging the authors to include the maximal ergodic theorem for backward trees.
Thanks to Federico Rodriguez Hertz for computing the Radon-Nikodym cocycle for the Gauss map.
We are grateful to Ronnie Chen for inspiring discussion and for many useful comments and corrections.
We owe a huge acknowledgement to Lewis Bowen for many useful suggestions and insight, which led us to the results in \cref{sec:pmp_actions}. We are also immensely thankful for the support and encouragement of Alexander Bufetov. Finally, the authors thank each other so that our acknowledgement includes women.
\end{acknowledge}

\section{Preliminaries}\label{sec:prelim}

Our set $\N$ of natural numbers includes $0$ and our $L^p$ spaces are real.

Throughout, let $(X,\mu)$ be a standard probability space. A \dfn{countable Borel equivalence relation} on $(X,\mu)$ is an equivalence relation that is a Borel subset of $X^2$ whose every equivalence class is countable. By the Luzin--Novikov uniformization theorem \cite[Theorem 18.10]{Kechris:classical_DST}, if $B\subseteq X$ is Borel, then so is its \dfn{$E$-saturation} $[B]_E \defeq \bigcup_{x\in B}[x]_E$.

\subsection{The Radon--Nikodym cocycle}

We say that a countable Borel equivalence relation $E$ on $(X,\mu)$ is \dfn{$\mu$-preserving} (resp. \dfn{null-preserving}) if for any partial Borel injection $\gamma : X \partialto X$ with $\graph(\gamma) \subseteq E$, $\mu(\dom(\gamma))=\mu(\im(\gamma))$ (resp. $\dom(\gamma)$ is $\mu$-null if and only if $\im(\gamma)$ is $\mu$-null). 

Note that $E$ is null-preserving if and only if the $E$-saturations of null sets are null. By \cite[Section 8]{KM}, a null-preserving $E$ admits an a.e.\ unique \dfn{Radon--Nikodym cocycle} $\coc : E \to \R^+$ with respect to $\mu$. Being a \dfn{cocycle} for a function $(x,y) \mapsto \coc_x(y) : E \to \R^+$ means that it satisfies the cocycle identity: 
\[
\coc_x(y) \coc_y(z) = \coc_x(z),
\]
for all $E$-equivalent $x,y,z \in X$. We say that $\coc$ is the \dfn{Radon--Nikodym cocycle} with respect to $\mu$ (or that $\mu$ is \dfn{$\coc$-invariant}) if it is Borel (as a real-valued function on the standard Borel space $E$) and for any partial Borel injection $\gamma : X \partialto X$ with $\graph(\gamma) \subseteq E$ and $f \in L^1(X,\mu)$,
\begin{equation}\label{cocycle} 
\int_{\im(\gamma)} f(x)\, d\mu(x) = \int_{\dom(\gamma)} f(\gamma(x))\coc_x(\gamma(x))\, d\mu(x). 
\end{equation}

We call a subset $\finS$ of an $E$-equivalence class $C$ \dfn{$\coc$-finite} if $\coc_x(\finS) \defeq \sum_{y \in \finS} \coc_x(y) < \infty$ for some $x \in C$ (although the value $\coc_x(\finS)$ depends on the choice of $x$, its finiteness does not, by the cocycle identity). Further, for a nonempty $\coc$-finite $\finS \subseteq C$ and a non-negative function $f : X \to [0,\infty)$, we define the \dfn{$\coc$-weighted average} of $f$ over $\finS$ by
\[
A_f^\coc[\finS] \defeq \frac{\sum_{y\in \finS} f(y) \coc_x(y)}{\coc_x(\finS)}
\]
for some $x \in C$. Again, this value does not depend on the choice of $x\in C$ by the cocycle identity. We also use the same notation for a general real-valued function $f$, provided $\sum_{y \in \finS} |f(y)| \coc_x(y) < \infty$. 

\subsection{Null-preserving orbit equivalence relations}

We say that a Borel transformation $T : X \to X$ is \dfn{$\mu$-preserving} (resp. \dfn{null-preserving}) if $T_* \mu = \mu$ (resp. $T_* \mu \sim \mu$). Let $E_T$ denote the induced \dfn{orbit equivalence relation} on $X$, that is: 
\[
x E_T y \defequiv \exists n,m, \; T^n(x)=T^m(y).
\]
Note that if $T$ is countable-to-one, $E_T$ is countable (i.e.\ each $E$-class is countable).

Even when $T$ is $\mu$-preserving, $E_T$ may not be $\mu$-preserving since $T$ may not be injective. In fact, $E_T$ may not even be null-preserving (it is possible for the $T$-image of a null set to have positive measure). However, the following result (originally proven by Kechris using Woodin's argument for the analogous statement for Baire category) shows that we may neglect this issue. Three different proofs of this are given in \cite[Proposition 2.1]{Mil1} and \cite[Proposition 1.3]{Mil2}, and we give a fourth one here, which is a measure-exhaustion argument.

\begin{lemma}[Kechris--Woodin]\label{null-preserving_on_conull}
Let $E$ be a countable Borel equivalence relation on a standard probability space $(X,\mu)$. Then $E$ is null-preserving when restricted to some conull set.
\end{lemma}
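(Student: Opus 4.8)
The plan is to reduce the problem to handling one Borel automorphism at a time, deal with each by a one-step measure--exhaustion, and then take a countable union.

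First, by the Feldman--Moore theorem, write $E=\bigcup_{n\in\N}\graph(g_n)$ for Borel automorphisms $g_n\colon X\to X$; then $[A]_E=\bigcup_n g_n(A)$ for every Borel $A\subseteq X$. Recall from the discussion above that a countable Borel equivalence relation is null-preserving if and only if its saturations of $\mu$-null sets are $\mu$-null, and that for $A\subseteq Y$ one has $[A]_{E|_Y}\subseteq[A]_E$. So it suffices to produce a conull Borel set $Y\subseteq X$ such that $\mu([A]_E)=0$ for every $\mu$-null Borel $A\subseteq Y$.

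Next I would isolate the core fact about a single Borel automorphism $g$ of $(X,\mu)$: there is a $\mu$-null Borel set $N_g$ with $\mu(g(A))=0$ for every $\mu$-null Borel $A$ disjoint from $N_g$. To prove it, set $s_g\defeq\sup\{\mu(g(A)) : A\subseteq X\ \text{Borel},\ \mu(A)=0\}\le 1$, choose $\mu$-null Borel sets $A_k$ with $\mu(g(A_k))\to s_g$, and put $N_g\defeq\bigcup_k A_k$; this is $\mu$-null and Borel, its image $g(N_g)$ is Borel by the Lusin--Suslin theorem, and $\mu(g(N_g))=s_g$ by continuity of measure from below. Now if $A$ is $\mu$-null Borel with $A\cap N_g=\emptyset$, then $A\cup N_g$ is $\mu$-null, so $\mu(g(A\cup N_g))\le s_g$; but $g$ is injective, so $g(A\cup N_g)=g(A)\sqcup g(N_g)$, giving $\mu(g(A))+s_g\le s_g$, hence $\mu(g(A))=0$.

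Finally I would assemble: let $N\defeq\bigcup_n N_{g_n}$, a $\mu$-null Borel set, and $Y\defeq X\setminus N$, which is conull. For any $\mu$-null Borel $A\subseteq Y$, each $A\cap N_{g_n}$ is empty, so $\mu(g_n(A))=0$ for all $n$ by the core fact, whence $\mu([A]_E)=\mu(\bigcup_n g_n(A))=0$; by the first step this means $E|_Y$ is null-preserving. The only place where real care is needed is in identifying what to delete: one cannot in general take $Y$ to be $E$-invariant (for instance, if some $g_n$ maps a $\mu$-null set onto a set of positive measure, no conull $E$-invariant set can work), so $Y$ must be genuinely non-invariant, and the exhaustion is precisely what extracts the right $\mu$-null ``source'' set $N_g$; injectivity of the $g_n$ --- the reason to invoke Feldman--Moore rather than merely the Luzin--Novikov uniformization theorem \cite{Kechris:classical_DST}*{Theorem 18.10} --- is what makes the one-step exhaustion succeed, while the remaining points (Borelness of images, countable additivity) are routine.
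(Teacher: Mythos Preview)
Your proof is correct and follows the same overall architecture as the paper's: invoke Feldman--Moore, handle one generating automorphism at a time via a measure-exhaustion, then intersect the resulting conull sets. The implementation details differ in two minor but pleasant ways. First, you perform the exhaustion in a single step---choose null sets $A_k$ with $\mu(g(A_k))\to s_g$ and take $N_g\defeq\bigcup_k A_k$---whereas the paper runs an iterative construction, at each stage removing the image of a set whose measure exceeds half the current supremum. Second, you delete the null \emph{sources} $N_g$ from $X$, while the paper deletes the null \emph{images} $\gamma_{X_n}(B_n)$; these are dual choices and either works since the Feldman--Moore generators are bijections. Your version is a bit more streamlined and makes the role of injectivity (needed for $g(A\cup N_g)=g(A)\sqcup g(N_g)$) more transparent; the paper's iterative phrasing is perhaps more in the spirit of the Kechris--Woodin argument it cites. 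One tiny quibble: your appeal to ``continuity from below'' for $\mu(g(N_g))=s_g$ is not quite the point---the $A_k$ need not be nested---but the conclusion is immediate anyway since $g(N_g)\supseteq g(A_k)$ for each $k$ gives $\mu(g(N_g))\ge s_g$, while $N_g$ null gives the reverse inequality.
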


\begin{proof} 
Using the Feldman--Moore theorem \cite{FM}, fix a countable set $\Delta$ of Borel involutions $\delta : X \to X$ such that $E = \bigcup_{\delta \in \Delta} \graph(\delta)$.

\begin{claim*}
For a Borel set $Y \subseteq X$, and $\delta\in \Delta$, let $\delta_Y\defeq \delta\cap Y^2$, i.e., the restriction of $\delta$ to the set $\set{y\in Y\cap\dom(\delta):\delta(y)\in Y}$. Then $E \rest{Y}$ is null-preserving if for each $\delta \in \Delta$ and each Borel $B \subseteq \dom(\delta_Y)$, $\mu(B) > 0$ implies $\mu(\delta_Y(B)) > 0$.
\end{claim*}
\begin{pf}
Let $B \subseteq Y$ be a set whose saturation $[B]_{E \rest{Y}}$ has positive measure. Then since $[B]_{E \rest{Y}} = \bigcup_{\delta \in \Delta} \delta_Y(B\cap \dom(\delta_Y))$, we must have $\mu(\delta_Y(B\cap\dom(\delta_Y))) > 0$ for some $\delta \in \Delta$, which implies $\mu(B)\geq \mu(\delta_Y^2(B\cap\dom(\delta_Y))) > 0$, by the assumption. 
\end{pf}

To construct a conull set $Y$ satisfying the hypothesis of the claim, it is enough to fix $\delta \in \Delta$ and find a conull set $Y_\delta$ such that for each Borel $B \subseteq \dom(\delta_{Y_\delta})$, $\mu(B) > 0$ implies $\mu(\delta_{Y_\delta}(B)) > 0$ (because then $Y \defeq \bigcap_{\delta \in \Delta} Y_\delta$ is as desired).

To this end, fix $\delta \in \Delta$. We recursively construct a decreasing sequence $(X_n)$ of conull sets and a pairwise disjoint sequence $(B_n)$ of Borel subsets $B_n \subseteq X_n$ as follows. Let $X_0 \defeq X$ and suppose $X_n$ has been constructed. If there is a Borel set $B \subseteq \dom{\delta_{X_n}} $ with $\mu(B)>0$ and $\mu(\delta_{X_n}(B))=0$, let $B_n$ be one such set with 
\begin{equation}\label{eq:half-max_measure}
    \mu(B_n) > \frac{1}{2} \sup \set{\mu(B) : B \subseteq \dom{\delta_{X_n}} \text{ Borel with } \mu(\delta_{X_n}(B )) = 0}. 
\end{equation}

Then put $X_{n+1} \defeq X_n \setminus \delta_{X_n}(B_n)$. Having constructed these sequences of sets, we check that the conull set $X_\infty \defeq \bigcap_n X_n$ is as desired. Indeed, let $B \subseteq \dom(\delta_{X_\infty})$ be a Borel set with $\mu(\delta_{X_\infty}(B))=0$. Then for all $n$, \labelcref{eq:half-max_measure} implies $\mu(B_n) > \frac{1}{2} \mu(B)$, so $\mu(B) = 0$ because $\mu(B_n) \to 0$, since the $B_n$ are pairwise disjoint and $\mu(X) < \infty$.
\end{proof}

\begin{prop}\label{WLOG}
For any countable-to-one null-preserving Borel transformation $T$ on a standard probability space $(X,\mu)$, there is a conull set $X' \subseteq X$ such that $T(X') = X'$ and $E \rest{X'}$ is null-preserving.
\end{prop}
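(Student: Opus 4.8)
The plan is to apply the Kechris--Woodin lemma (\cref{null-preserving_on_conull}) to get a conull Borel set on which $E_T$ is null-preserving, and then perform two successive ``cleanings'': first shrinking it to a conull Borel set that is forward-invariant under $T$, and then deleting a null set to make $T$ genuinely surjective onto it. The only mildly delicate point is to achieve $T(X') = X'$ literally, not merely modulo a null set, while keeping $X'$ Borel and conull.

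Here is how I would carry this out. First, by \cref{null-preserving_on_conull} applied to $E_T$, fix a conull Borel $Y_0$ with $E_T \rest{Y_0}$ null-preserving. Next, set $Y \defeq \bigcap_{n\ge 0} T^{-n}(Y_0)$. Since $T$ is null-preserving, $T^{-1}$ sends conull sets to conull sets (this uses $T_*\mu \ll \mu$), so each $T^{-n}(Y_0)$ is conull, hence $Y$ is conull; it is Borel; and $T^{-1}(Y) = \bigcap_{n\ge 1} T^{-n}(Y_0) \supseteq Y$, which gives $T(Y)\subseteq Y$. Also $E_T\rest Y$ is null-preserving, being the restriction of the null-preserving $E_T\rest{Y_0}$ to the conull Borel subset $Y\subseteq Y_0$ (any partial Borel injection inside $Y$ with graph in $E_T$ is also one inside $Y_0$, and $\mu$-nullity is unambiguous on conull subsets). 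So, replacing $Y_0$ by $Y$, we may assume from now on that $Y$ is conull Borel, $T(Y)\subseteq Y$, and $E_T\rest Y$ is null-preserving.

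Now consider the ``surjectivity defect'' $D\defeq Y\setminus T(Y)$. It is Borel: $T$ is countable-to-one, so $T(Y)$ is Borel by Luzin--Novikov. I claim $D$ is null. Indeed, every point of $Y$ maps into $T(Y)$, so $T^{-1}(X\setminus T(Y))\subseteq X\setminus Y$, which is null; hence $T_*\mu(X\setminus T(Y)) = \mu\big(T^{-1}(X\setminus T(Y))\big) = 0$, and since $\mu\ll T_*\mu$ (the other half of $T$ being null-preserving), $\mu(X\setminus T(Y)) = 0$, so $D$ is null. Because $E_T\rest Y$ is null-preserving, the saturation $[D]_{E_T\rest Y}$ is then null, and it is Borel by Luzin--Novikov. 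Put
\[
X' \defeq Y\setminus [D]_{E_T\rest Y},
\]
a conull Borel set. As the complement in $Y$ of an $E_T\rest Y$-saturated set, $X'$ is itself $E_T\rest Y$-saturated, so $E_T\rest{X'}$ is null-preserving (a restriction of $E_T\rest Y$ to a conull Borel subset). Finally, $T(X') = X'$: if $x\in X'$ then $T(x)\in T(Y)\subseteq Y$ and $x \mathrel{E_T} T(x)$, so $T(x)\in X'$ by saturation, giving $T(X')\subseteq X'$; conversely, if $x\in X'$ then $x\notin D$, so $x\in T(Y)$, i.e.\ $T(y) = x$ for some $y\in Y$, and then $y \mathrel{E_T} x\in X'$ forces $y\in X'$ by saturation, so $x = T(y)\in T(X')$.

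The step I expect to be the crux is precisely this last bootstrap: one wants $T$ to map $X'$ \emph{onto} $X'$, and a priori the image could lose a null set's worth of points at every stage (so naively one is driven to a transfinite removal process whose rank and Borelness need care). The observation that avoids all of this is that the defect $Y\setminus T(Y)$ is \emph{automatically} null (images of conull sets under a null-preserving map are conull), so one can afford to delete its entire $E_T$-saturation --- which is null exactly because \cref{null-preserving_on_conull} was used to arrange $E_T\rest Y$ null-preserving --- and deleting a saturated set cannot reintroduce any defect, since every surviving point still has a surviving preimage.
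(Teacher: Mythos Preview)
Your proof is correct and follows essentially the same three-step approach as the paper's: apply \cref{null-preserving_on_conull} to get a conull set $Y_0$, intersect $\bigcap_n T^{-n}(Y_0)$ to achieve forward-invariance, then remove the $E_T$-saturation of the surjectivity defect $Y\setminus T(Y)$. Your write-up is in fact more careful than the paper's in tracking Borelness (via Luzin--Novikov for $T(Y)$) and in explicitly verifying both inclusions $T(X')\subseteq X'$ and $X'\subseteq T(X')$, which the paper leaves implicit.
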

\begin{proof}
Let $X_0 \subseteq X$ be a conull set given by \cref{null-preserving_on_conull}, i.e.\ $E_T \rest{X_0}$ is null-preserving. Furthermore, because $T$ is null-preserving, the set $X_1 \defeq \bigcap_{n \in \N} T^{-n}(X_0)$ is still conull, but we now have $T(X_1) \subseteq X_1$. Lastly, again because $T$ is null-preserving, the $T$-image of a conull set is conull, so $Z \defeq X_1 \setminus T(X_1)$ is null. Hence, $[Z]_{E_T}$ is also null (because $E_T \rest{X_1}$ is null-preserving), and therefore $X' \defeq X_1 \setminus [Z]_{E_T}$ is still conull, but now we finally have $T(X') = X'$.
\end{proof}

\begin{assumption}\label{assumption}
Since all statements in the current paper are modulo null sets, without loss of generality (by \cref{WLOG}), we assume that all countable-to-one null-preserving Borel transformations $T$ on $(X,\mu)$ are surjective and the induced equivalence relations $E_T$ are null-preserving. Thus, we let $\coc$ be the Radon--Nikodym cocycle of $E_T$ with respect to $\mu$.
\end{assumption}

\subsection{Right inverses}\label{subsec:right-inverses}

Call a set $\Gamma$ of Borel partial functions $\gamma : X \partialto X$ a \dfn{complete set of Borel partial right inverses} of $T$ if the graphs of the $\gamma \in \Gamma$ are pairwise disjoint and for each $x \in X$,
\[
T^{-1}(x) = \set{\gamma(x) : x \in \dom(\gamma) \text{ and } \gamma \in \Gamma}.
\]
Because $T$ is countable-to-one, the Luzin--Novikov Uniformization theorem \cite[Theorem 18.10]{Kechris:classical_DST} ensures that $T$ admits a countable complete set $\Gamma$ of Borel partial right inverses, and we fix such a (countable) $\Gamma$ for the remainder of this subsection.

For each $n \in \N$, we think of $\Gamma^n$ as words in $\Gamma$ of length $n$. For each word $t\in \Gamma^n$, we define the partial function $t : X \partialto X$ by recursion on the length of $t$ as follows: if $t=\emptyset$, put $t(x)\defeq x$ for all $x\in X$. If $t = \gamma \conc t'$, $x \in \dom(t')$, and $t'(x) \in \dom(\gamma)$, define $t(x) \defeq \gamma(t'(x))$. Otherwise, leave $t(x)$ undefined.

\begin{obs}
If $\finS \subseteq \Gamma^n$ for some $n\in\N$, then the sets $t(X)$, $t \in \finS$, are pairwise disjoint.
\end{obs}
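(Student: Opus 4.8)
The plan is to prove the statement by induction on $n$, establishing in fact the slightly cleaner assertion that distinct $t \in \Gamma^n$ have disjoint images (the restriction to an arbitrary $S \subseteq \Gamma^n$ is then immediate). Equivalently, every point in $\bigcup_{t \in \Gamma^n} t(X)$ determines the sequence $t$ that produces it.

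First I would record the two basic facts about $\Gamma$ that the argument rests on. Since each $\gamma \in \Gamma$ is a partial right-inverse of $T$, we have $T(\gamma(x)) = x$ for every $x \in \dom(\gamma)$; consequently the images $\gamma(X)$, $\gamma \in \Gamma$, are pairwise disjoint. Indeed, if $\gamma(x) = \delta(x')$ for $\gamma, \delta \in \Gamma$, then applying $T$ gives $x = x'$, so $(x, \gamma(x)) \in \graph(\gamma) \cap \graph(\delta)$, which forces $\gamma = \delta$ because the graphs of the members of $\Gamma$ are pairwise disjoint. (In fact these images partition $X$, as every $y \in X$ equals $\gamma(T(y))$ for the unique $\gamma$ with $(T(y), y) \in \graph(\gamma)$, but only disjointness is needed here.) Second, unwinding the recursive definition of the composed partial functions, for $t = \gamma \conc t' \in \Gamma^n$ with $n \ge 1$ and $x \in \dom(t)$ one has $t(x) = \gamma(t'(x)) \in \gamma(X)$ and $T(t(x)) = t'(x)$; that is, $T$ maps $t(X)$ into $t'(X)$ and "strips off the first letter."

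For the induction, the case $n = 0$ is vacuous since $\Gamma^0$ is a singleton. For the inductive step, suppose $s, t \in \Gamma^n$ with $n \ge 1$ and $p \in s(X) \cap t(X)$; write $s = \gamma \conc s'$ and $t = \delta \conc t'$ with $s', t' \in \Gamma^{n-1}$. Then $p \in \gamma(X) \cap \delta(X)$, so $\gamma = \delta$ by the disjointness noted above, and $T(p) \in s'(X) \cap t'(X)$, so by the inductive hypothesis $s' = t'$. Hence $s = \gamma \conc s' = \delta \conc t' = t$, which shows that distinct elements of $\Gamma^n$ have disjoint images, as desired.

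I do not expect any genuine obstacle here: the proof is elementary and the only point requiring care is the bookkeeping with partial functions — tracking domains and verifying that the right-inverse identity $T \circ \gamma = \mathrm{id}_{\dom(\gamma)}$ propagates through composition in the form $T \circ t = t'$ when $t = \gamma \conc t'$, which is exactly what lets the induction descend.
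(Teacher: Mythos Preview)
Your proof is correct. The paper does not actually supply a proof of this observation; it is stated without justification, presumably because it follows immediately from the pairwise disjointness of the graphs of the $\gamma \in \Gamma$ together with the recursive definition of $t(x)$. Your induction on $n$, using that $T$ strips off the first letter (i.e.\ $T \circ t = t'$ when $t = \gamma \conc t'$), is the natural way to make this precise, and the base case $n=1$ is exactly the disjointness of the images $\gamma(X)$ that you establish from the disjointness of graphs.
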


\noindent For $x \in X$ and $\finS \subseteq \Gamma^{< \N}$, define
\[
\finS \cdot x \defeq \set{t(x) : t \in \finS \text{ and } x \in \dom(t)}.
\]
Also, if $n\in\N$, define
\[
\triangleright_T^n \cdot x \defeq  \bigcup_{i \le n} T^{-i}(x) = \Gamma^{\le n}\cdot x.
\]
Notice that $\triangleright_T^n\cdot x$ does not depend on the choice of $\Gamma$.

For any nonempty $\finS \subseteq \Gamma^{< \N}$ and for all $f \in L^1(X,\mu)$, let $\^\finS f(x)$ be the weighted-average over the sets $\finS \cdot x$, for $x \in X$, i.e.\
\[
\^\finS f(x) \defeq A^\coc_f[\finS \cdot x].
\]

\subsection{Ergodic decomposition and conditional expectation}

This subsection is only used in \cref{sec:pmp_actions}. Let $E$ be a null-preserving countable Borel equivalence relation on a standard probability space $(X,\mu)$ and let $\coc$ be the Radon--Nikodym cocycle of $E$ with respect to $\mu$. Let $P(X)$ denote the standard Borel space of all Borel probability measures on $X$.

\begin{defn}\label{defn:ergodic_dec}
A Borel map $\epsilon : X \to P(X)$, $x \mapsto \epsilon_x$, is called an \dfn{$E$-ergodic decomposition} of $\mu$ if for each $x \in X$, the measure $\epsilon_x$ is $E$-ergodic, $\coc$-invariant, supported on $\epsilon^{-1}(\epsilon_x)$, and for all Borel sets $A \subseteq X$,
\[
\mu(A) = \int_X \epsilon_x(A) \;d\mu(x).
\]
\end{defn}

It follows from this definition that an $E$-ergodic decomposition, if it exists, is unique modulo a $\mu$-null set. As for the existence, for pmp countable Borel equivalence relations this is due to Farrel and Varadarajan \cite{Farrell,Varadarajan} (see also \cite[Theorem 3.3]{KM}), and more generally, it is a theorem of Ditzen for null-preserving equivalence relations \cite{Ditzen:thesis} (see also \cite[Theorem 5.2]{Miller:meas_with_cocycle_II}). We will only use the existence of an ergodic decomposition for pmp equivalence relations, but we will use the following connection with conditional expectation for null-preserving equivalence relations.

\begin{prop}[Conditional expectation via ergodic decomposition]\label{erg_dec--cond_exp}
Let $\epsilon : X \to P(X)$ be an $E$-ergodic decomposition of $\mu$. For any $f \in L^1(X,\mu)$ and $E$-invariant Borel set $B \subseteq X$,
\begin{equation}\label{eq:integral_over_inv-set}
\int_B f \;d\mu = \int_B \int_X f(z) \;d\epsilon_x(z) \;d\mu(x).
\end{equation}
In particular, for $\mu$-a.e.\ $x \in X$,
\[
\finv(x) = \int_X f \;d\epsilon_x,
\]
where $\finv$ is the $\mu$-conditional expectation with respect to the $\sigma$-algebra of $E$-invariant Borel sets.
\end{prop}
\begin{proof}
The ``in particular'' part follows from the definition and uniqueness of the conditional expectation and the fact that the map $x \mapsto \int_X f(z) \;d\epsilon_x(z)$ is $E$-invariant.

As for the main part, a standard approximation argument gives that for each $h \in L^1(X,\mu)$,
\begin{equation}\label{eq:erg_dec}
\int_X h \;d\mu = \int_X \int_X h(z) \;d\epsilon_x(z) \;d\mu(x).
\end{equation}

\begin{claim*}
For every $E$-invariant Borel set $B \subseteq X$, $\epsilon_x(B) = \1_B(x)$ for $\mu$-a.e.\ $x \in X$.
\end{claim*}
\begin{pf}
Indeed, letting $X_B \defeq \set{x \in X : \epsilon_x(B) = 1}$, we see that
\[
\mu(X_B \setminus B) = \int_X \epsilon_x(X_B \setminus B) \;d\mu(x) = \int_{X_B} \epsilon_x(X_B \setminus B) \;d\mu(x) + \int_{X \setminus X_B} \epsilon_x(X_B \setminus B) \;d\mu(x) = 0,
\]
because for each $x \in X_B$, $\epsilon_x(B) = 1$, so $\epsilon_x(X_B \setminus B) = 0$, and for each $x \notin X_B$, $X_B \cap \epsilon^{-1}(\epsilon_x) = \0$, so $\epsilon_x(X_B) = 0$, in particular, $\epsilon_x(X_B \setminus B) = 0$. The same argument, but with $X \setminus B$ in place of $B$, shows that $\mu(B \setminus X_B) = 0$.
\end{pf}

This and \labelcref{eq:erg_dec} applied to $h \defeq f \cdot \1_B$ imply \labelcref{eq:integral_over_inv-set}:
\begin{align*}
\int_X f \cdot \1_B \,d\mu 
&= 
\int_X \int_X f(z) \cdot \1_B(z) d\epsilon_x(z) d\mu(x) 
\\
&= 
\int_B \int_X f(z) \cdot \1_B(z) d\epsilon_x(z) d\mu(x) 
+ 
\int_{X \setminus B} \int_X f(z) \cdot \1_B(z) d\epsilon_x(z) d\mu(x)
\\
&=
\int_B \int_X f(z) d\epsilon_x(z) d\mu(x) 
+ 
0.\qedhere
\end{align*}
\end{proof}

\subsection{Limits as weight goes to infinity}\label{subsec:weight-limits}

For a set $\@T$ of objects (for us, it would be trees of various kinds), a weight-function $\m : \@T \to [0,\infty)$, a function $g : \@T \to \R$, and $L \in [-\infty, \infty]$, we write 
\begin{equation*}
g(\tree) \to L \;\text{ as }\; \m(\tree) \to \infty, 
\; 
\text{where $\tree$ ranges over $\@T$}
\end{equation*}
to mean that for every $\e > 0$, there is $M > 0$ such that for all $\tree \in \@T$ with $\m(\tree) \ge M$ we have $|g(\tree) - L| < \e$. We refer to $L$ as $\lim_{\m(\tree) \to \infty} g(\tree)$, where $\tree$ ranges over $\@T$. We also write
\begin{equation*}
\limsup_{\m(\tree) \to \infty} g(\tree) = L, 
\;
\text{where $\tree$ ranges over $\@T$}   
\end{equation*}
if $L$ is the limit of $\sup \set{g(\tree) : \tree \in \@T \text{ and } \m(\tree) \ge M}$ as $M \to \infty$; and similarly, for the $\liminf$. We omit writing ``where $\tree$ ranges over $\@T$'' when it is clear from the context.

\section{The local-global bridge}\label{sec:bridge}

Let $T : X \to X$ be a countable-to-one Borel transformation on a standard probability space $(X,\mu)$, and fix a countable complete set $\Gamma$ of Borel partial right inverses of $T$.

\subsection{Markov operators}

Before proving local-global bridge lemmas, we define and recall convenient operator-theoretic terminology. We call a linear operator $P : L^1(X,\mu) \to L^1(X,\mu)$
\begin{itemize}
    \item \dfn{non-negative}, denoted $P \ge 0$, if $Pf \ge 0$ for each non-negative $f \in L^1(X,\mu)$;
    
    \item \dfn{mean-preserving} if $\int_X Pf \; d\mu = \int_X f \;d\mu$ for each $f \in L^1(X,\mu)$;
    
    \item \dfn{Markov} if it is non-negative, mean-preserving, and $P 1 = 1$;
    
    \item an \dfn{$L^p$-contraction}, for $1 \le p \le \infty$, if $\norm{Pf}_p \le \norm{f}_p$ for each $f \in L^p(X,\mu)$.
\end{itemize}

\begin{prop}\label{mean-preserving_L1-contraction}
Every non-negative mean-preserving linear operator $P$ is an $L^1$-contraction. In fact, for $1 \le p < \infty$, if $(P |f|)^p \le P (|f|^p)$ for all $f \in L^p(X,\mu)$, then $P$ is an $L^p$-contraction.
\end{prop}
\begin{proof}
We only prove the second assertion as it subsumes the first. Fix $f \in L^p(X,\mu)$. Because $P(|f| \pm f) \ge 0$, linearity implies $|Pf| \le P |f|$. Then $\norm{Pf}_p^p =\norm{|Pf|^p}_1 \le \norm{(P|f|)^p}_1 \le \norm{P(|f|^p)}_1 = \norm{|f|^p}_1 = \norm{f}_p^p$, where the penultimate equality is by mean-preservation.
\end{proof}

\begin{prop}\label{nonnegative-fixing-1_Linf-contraction}
Every non-negative linear operator $P$ with $P1=1$ is an $L^\infty$-contraction.
\end{prop}
\begin{proof}
For each $f \in L^\infty(X,\mu)$, $P (\norm{f}_\infty \pm f) \ge 0$, so $|P f| \le P \norm{f}_\infty = \norm{f}_\infty$.
\end{proof}

\subsection{Local-global bridge for null-preserving $T$}

Throughout this subsection suppose that $T$ is $\mu$-null-preserving. In addition, we assume without loss of generality that $T$ satisfies \cref{assumption}, and we let $\coc$ be the Radon--Nikodym cocycle of $E_T$ with respect to $\mu$. 

For $f \in L^1(X,\mu)$ and $x \in X$, let
\begin{equation}\label{eq:backop}
\backop f(x) \defeq \sum_{y\in T^{-1}x}f(y)\coc_x(y).
\end{equation}

\begin{lemma}\label{avg_column}
For each $n \in \N$, $\finS \subseteq \Gamma^n$, and $f \in L^1(X,\mu)$,
\[
\int_{X} \sum_{y \in \finS \cdot x} f(y) \coc_x(y) \, d\mu(x) = \int_{\im(\finS)} f \, d\mu,
\]
where $\im(\finS) = \bigsqcup_{t \in \finS} t(X)$ \emph{(}$\bigsqcup$ denotes a disjoint union\emph{)}. In particular, $P_T$ is a non-negative mean-preserving operator, and hence an $L^1$-contraction.
\end{lemma}

\begin{proof}
That $\backop$ is mean-preserving (and hence an $L^1$-contraction by \cref{mean-preserving_L1-contraction}) is a special case of the main part with $\finS \defeq \Gamma$ because $\Gamma \cdot x = T^{-1}x$. As for the main part, it is enough to prove it for non-negative functions, since then the statement for an arbitrary  $f\in L^1(X,\mu)$ follows by the decomposition $f = f^+ - f^-$ into positive and negative parts.
To this end, we compute:
\begin{align*}
    \int_{X} \sum_{y \in \finS \cdot x} f(y)\coc_x(y) \, d\mu(x) &= \int_X \sum_{t\in \finS} \mathbb{1}_{\dom(t)}(x)f(t(x))\coc_x(t(x)) \, d\mu(x)\\
    \eqcomment{because $f \ge 0$}
    &= \sum_{t\in \finS}\int_X \mathbb{1}_{\dom(t)}(x)f(t(x))\coc_x(t(x)) \, d\mu(x)\\
    \eqcomment{by \cref{cocycle}}
    &= \sum_{t\in \finS}\int_{\text{im}(t)}f(x) \, d\mu(x) 
    \\
    &= \int_{\text{im}(\finS)} f(x) \, d\mu(x).
    \qedhere
\end{align*}
\end{proof}

Let $K_T$ denote the Koopman operator on $L^1(X,\mu)$ induced by $T$, i.e. $K_T f \defeq f\circ T$ for $f \in L^1(X,\mu)$. We explicitly calculate its adjoint $K_T^*$.

\begin{prop}\label{adjoint_calc}
The operator $\backop$ is the adjoint $K_T^*$ of the Koopman operator $K_T$; more precisely, for all $f,g\in L^1(X,\mu)$ such that $K_T f \cdot g \in L^1(X,\mu)$,
\[
\int_X K_T f\cdot g\; d\mu = \int_X f \cdot \backop g\;d\mu.
\]
\end{prop}
\begin{proof}
We compute:
\begin{align*}
\int_X K_T(f)(x)\cdot g(x)\; d\mu(x) 
&= 
\int_X (f\circ T)(x)g(x) d\mu(x)
\\
\eqcomment{\cref{avg_column}} 
&= 
\int_X \sum_{y\in T^{-1}(x)}(f\circ T)(y)g(y)\coc_x(y) \; d\mu(x)
\\
&= 
\int_X f(x) \sum_{y\in T^{-1}(x)}g(y)\coc_x(y) \; d\mu(x)
\\
&= 
\int_X f(x) \backop(g)(x)\; d\mu(x).
\qedhere
\end{align*}
\end{proof}

The following observation follows easily by induction on $n$ and the cocycle identity.

\begin{obs}\label{iterated_backop}
$\backop^n = \backop[T^n]$ for each $n \in \N$.
\end{obs}

\begin{cor}\label{sum_over_columns_is_finite}
For any $f \in L^1(X,\mu)$, for a.e.\ $x \in X$, and for all $n \in \N$,
\[
\sum_{y \in \triangleright_T^n \cdot x} |f(y)| \coc_x(y) < \infty.
\]
\end{cor}
\begin{proof}
This is just because $\sum_{y \in \triangleright_T^n \cdot x} |f(y)| \coc_x(y) = \sum_{i = 0}^n \backop[T^i] |f|(x)$ and each $\backop[T^i]$ maps $L^1(X,\mu)$ to $L^1(X,\mu)$ (\cref{avg_column}), so $\big\|\sum_{i = 0}^n \backop[T^i] |f|\big\|_1 < \infty$.
\end{proof}

\begin{lemma}[Local-global bridge for null-preserving $T$]\label{bridge:null-preserving}
Let $T$ be a countable-to-one null-preserving Borel transformation on $(X,\mu)$. For any $N \in \N$ and $f\in L^1(X,\mu)$,
\[
\int_X f\; d\mu = \int_X \frac{1}{N+1} \sum_{y \in \triangleright_T^N\cdot x} f(y)\coc_x(y) \, d\mu(x).
\]
\end{lemma}

\begin{proof}
Observing that $\sum_{y \in \triangleright_T^N\cdot x} f(y)\coc_x(y) = \sum_{n = 0}^N \backop[T^n] f(x)$, the statement follows from the fact that $\backop[T^n]$ is mean-preserving (\cref{avg_column}).
\end{proof}

Recall that for any nonempty $\finS \subseteq \Gamma^{< \N}$ and for all $f \in L^1(X,\mu)$, $\^\finS f(x)$ is the weighted-average over the sets $\finS \cdot x$, for $x \in X$, i.e.\
\[
\^\finS f(x) = A^\coc_f[\finS \cdot x].
\]
\cref{sum_over_columns_is_finite} shows that $\^\finS f$ is well-defined. Moreover:

\begin{cor}\label{S_avg_Lp-bounded}
Let $n \in \N$ and $\finS \subseteq \Gamma^{\le n}$ be such that the function $x \mapsto \coc_x(\finS \cdot x)$ is bounded below by some $w > 0$. For any $1 \le p \le \infty$, $\^\finS$ is a bounded operator on $L^p(X,\mu)$ with operator norm $\|\^\finS\|_p \le \left(\frac{n+1}{w}\right)^{1/p}$.
\end{cor}

\begin{proof}
For $p = \infty$, the statement is obvious, so suppose $p < \infty$. For any $f \in L^p(X,\mu)$,
\begin{align*}
    \|\^\finS f\|_p^p
    =& 
    \int_X |A^\coc_f[\finS \cdot x]|^p \;d \mu(x)
    \\
    \le&
    \int_X A^\coc_{|f|}[\finS \cdot x]^p \;d \mu(x)
    \\
    \eqcomment{Jensen's inequality}
    \le&
    \int_X A^\coc_{|f|^p}[\finS \cdot x] \;d \mu(x)
    \\
    =&
    \int_X \frac{1}{\coc_x(\finS \cdot x)} \sum_{y \in \finS \cdot x} |f(y)|^p \coc_x(y) \;d \mu(x)
    \\
    \le&
    \frac{n+1}{w} \int_X \frac{1}{n+1} \sum_{y \in \triangleright_T^n \cdot x} |f(y)|^p \coc_x(y) \;d \mu(x)
    \\
    \eqcomment{\cref{bridge:null-preserving}}
    =&
    \frac{n+1}{w} \norm{f}_p^p.
    \qedhere
\end{align*}
\end{proof}

\subsection{Local-global bridge for pmp $T$}

Throughout this subsection, we assume in addition that $T$ preserves the measure $\mu$.

\begin{lemma}\label{each_column_weight=1}
For a.e.\ $x \in X$, $\coc_x(T^{-n}x) = 1$ for each $n \in \N$; in other words, $\backop[T^n] 1 = 1$. In particular, $\coc_x(\triangleright_T^N\cdot x) = N+1$ for each $N \in \N$.
\end{lemma}

\begin{proof}
The second statement is immediate from the first. For the first statement, we may switch the quantifiers, i.e.\ prove that for each $n \in \N$, the formula holds a.e.\ By \cref{iterated_backop}, it is enough to prove the statement for $n=1$. 

To this end, we show that for each $\e > 0$, the set $Z_\e \defeq \set{x\in X: \coc_x(T^{-1}(x)) > 1 + \e}$ is null. This implies that $\set{x\in X: \coc_x(T^{-1}(x)) > 1}$ is null and an analogous argument shows that $\set{x\in X: \coc_x(T^{-1}(x)) < 1}$ is null as well. Since $T$ is $\mu$-preserving,
\begin{align*}
    \mu(Z_\e) 
    = 
    \mu({T^{-1}(Z_\e)})
    &= 
    \mu\left(\bigsqcup_{\gamma \in \Gamma} \gamma(Z_\e \cap \dom(\gamma))\right)
    \\
    &= 
    \sum_{\gamma \in \Gamma} \mu(\gamma(Z_\e \cap \dom(\gamma)))
    \\
    \eqcomment{by \cref{cocycle}}
    &=
    \sum_{\gamma \in \Gamma} \int_{Z_\e \cap \dom(\gamma)} \coc_x(\gamma(x))\; d\mu(x)
    \\
    &= 
    \int_{Z_\e} \sum_{\gamma \in \Gamma} \mathbb{1}_{\dom(\gamma)}(x) \coc_x(\gamma(x)) \; d\mu(x)
    \\
    &= 
    \int_{Z_\e} \sum_{y \in T^{-1}(x)} \coc_x(y) \; d\mu(x)
    \\
    &= 
    \int_{Z_\e} \coc_x(T^{-1}(x))\;d\mu(x) 
    \ge
    \mu(Z_\e) (1 + \e).
\end{align*}
This implies that $Z_\e$ is null, as desired.
\end{proof}

\cref{sum_over_columns_is_finite,each_column_weight=1,bridge:null-preserving} together immediately yield:

\begin{cor}[Local-global bridge for pmp $T$]\label{bridge:pmp}
Let $T$ be a countable-to-one pmp Borel transformation on $(X,\mu)$. For any $N \in \N$ and $f\in L^1(X,\mu)$, $A_{|f|}^\coc [\triangleright_T^N\cdot x] < \infty$ a.e.\ and
\[
\int_X f(x)\, d\mu(x) = \int_X A_f^\coc [\triangleright_T^N \cdot x]\, d\mu(x).
\]
\end{cor}

\begin{remark}
\cref{bridge:pmp} fails when we replace $\triangleright_T^N \cdot x$ with arbitrary subsets of the back-orbit of $x$, even trees (as in \cref{subsec:trees_tiling}). We may observe this by looking at indicator functions of the images of right inverses $\gamma$ of $T$. For example, if $T$ is the shift map on $(2^\N,\set{\frac{1}{2},\frac{1}{2}}^\N)$, and $f\defeq\mathbb{1}_{\set{x(0)=0}}$, then $\int f\;d\mu=\frac{1}{2}$, but $\int A_f^\coc\left[\set{x,0 \conc x}\right]\;d\mu= \frac{2}{3}$.
\end{remark}

\begin{cor}\label{backop_Markov_Lp-contraction}
The operator $\backop$ is Markov and an $L^p$-contraction for all $1 \le p \le \infty$.
\end{cor}
\begin{proof}
$\backop$ is Markov by \cref{avg_column,each_column_weight=1}, so it is an $L^\infty$-contraction (\cref{nonnegative-fixing-1_Linf-contraction}). Furthermore, \cref{each_column_weight=1} makes Jensen's inequality applicable for $1 \le p < \infty$, yielding $(\backop |f|)^p \le \backop |f|^p$, so \cref{mean-preserving_L1-contraction} applies.
\end{proof}

It is convenient to define the weighted averages over the sets $\triangleright_T^N$ as operators: for each $N \in \N$, $f \in L^1(X,\mu)$, and $x \in X$, define
\begin{equation}\label{eq:triangle_avg}
    \triangle_{T,N} f(x) \defeq A_f^\coc [\triangleright_T^N \cdot x].
\end{equation}

\cref{each_column_weight=1,iterated_backop} immediately imply:

\begin{cor}\label{avg-backop=triangle}
$\triangle_{T,N} = \frac{1}{N+1} \sum_{n = 0}^N \backop[T^n] = \frac{1}{N+1} \sum_{n = 0}^N \backop^n$ for each $N \in \N$.
\end{cor}

This and \cref{backop_Markov_Lp-contraction} imply:

\begin{cor}\label{triangle_Markov_Lp-contraction} 
For each $N \in \N$, the operator $\triangle_{T,N}$ is Markov and an $L^p$-contraction for all $1 \le p \le \infty$.
\end{cor}

\section{The tiling property and the backward ergodic theorem}\label{sec:theorem}

Throughout, let $(X,\mu)$ be a standard probability space and let $T:X\to X$ be a countable-to-one $\mu$-preserving Borel transformation, so by \cref{assumption}, $T$ is surjective and $E_T$ is null-preserving. Let $\coc : E_T \to \R^+$ be the Radon--Nikodym cocycle with respect to $\mu$. Finally, let $\Gamma$ be a complete set of Borel partial right inverses of $T$.

\subsection{The tiling property}\label{subsec:trees_tiling}

We now prove the needed tiling property and deduce our backward pointwise ergodic theorem (\cref{backward_erg:trees}) from it.

For a set $\Symb$ (which will typically be a countable complete set of Borel right inverses of a transformation $T$), let $\@T_\Symb \subseteq \@P(\Symb^{< \N})$ be the set of nonempty \dfn{set-theoretic} (but \dfn{right-rooted}) trees on $\Symb$ of finite height, where $\Symb^{< \N}$ is the set of all finite sequences of elements of $\Symb$. More precisely, for each $\tree \subseteq \Symb^{< \N}$, 
\begin{align*}
    \tree \in \@T_\Symb 
    \defequivlong
    &
    \tree \text{ is of finite height, i.e. }
    \tree \subseteq \@P(\Symb^{\le n}) 
    \text{ for some $n$, }
    \\
    &
    \tree \text{ contains the empty word }\emptyset,
    \\
    &
    \text{and for each $t_1,t_2\in \Symb^{<\N}$, if $t_1 t_2 \in \tree$, then $t_2$ is also in $\tree$}.
\end{align*}
For each $\tree \in \@T_\Symb$, denote by $h(\tree)$ the \dfn{height} of the tree $\tree$, i.e.\ the least $n \in \N$ such that $\tree \subseteq \Symb^{\le n}$.

\begin{lemma}[Tiling property]\label{tiling2}
Let $T$, $\coc$, and $\Gamma$ be as above.
Then for any measurable function $x \mapsto \tree_x : X \to \@T_\Gamma$ and $\e > 0$ there is $N \in \N$ and a set $X' \subseteq X$ of measure $\ge 1-\e$ such that for all $x \in X'$, the complete tree $\triangleright_T^N\cdot x$ can be covered, up to $\e$ fraction of its $\coc_x$-weight, by disjoint tiles of the form $\tree_y \cdot y$.

More precisely, for every $x \in X'$ there is a subset $S_x$ of $\triangleright_T^N \cdot x$ with $\rho_x(S_x) \ge (1-\e) \rho_x(\triangleright_T^N \cdot x)$ that is partitioned into sets of the form $\tree_y \cdot y$ for $y \in X$.
\end{lemma}

\begin{proof}
Let $L$ be large enough so that the set 
\[
B \defeq \set{x \in X: h(\tree_x) \geq L}
\]
has measure less than $\frac{\e^2}{2}$. Fix $N$ large enough so that $\frac{L}{N}<\frac{\e}{2}$. By \cref{each_column_weight=1}, we may assume that for each $x \in X$ and $n \in \N$, $\sum_{y \in T^{-n}(x)} \coc_x(y) = 1$, so 
\[
\coc_x(\triangleright_T^{N-L} \cdot x) = N-L > (1 - \frac{\e}{2}) N =  (1 - \frac{\e}{2}) \coc_x(\triangleright_T^N \cdot x).
\]
Thus, there is no harm in leaving $\triangleright_T^N\cdot x \setminus \triangleright_T^{N-L}(x)$ untiled.

We claim that for all but less than $\e$-measured set of $x \in X$, less than $\frac{\e}{2}$ $\coc_x$-fraction of $y \in \triangleright_T^N\cdot x$ are in $B$, i.e.\ the set
\[
C \defeq \set{x \in X: A_{\mathbb{1}_B}^\coc[\triangleright_T^N\cdot x] \geq \frac{\e}{2}},
\]
has measure less than $\e$. Indeed:
\begin{align*}
    \frac{\e^2}{2} 
    > 
    \mu(B)
    &= 
    \int_X \mathbb{1}_B(x) \, d\mu(x)
    \\
    \eqcomment{by \cref{bridge:pmp}}
    &= 
    \int_X A_{\mathbb{1}_B}^\coc[\triangleright_T^N\cdot x]\, d\mu(x)
    \\
    &\ge 
    \int_C A_{\mathbb{1}_B}^\coc[\triangleright_T^N\cdot x]\, d\mu(x)
    \\
    &\ge
    \frac{\e}{2}\mu(C).
\end{align*}

So we just need to fix $x \in X \setminus C$ and tile the set $\triangleright_T^N\cdot x$ up to an $\e$ $\coc_x$-fraction. 
We do this by the following straightforward algorithm (see \cref{triangle} in \cref{subsec:intro:proof}): if there is $n \le N$ and $y \in T^{-n}(x)$ that is not covered by a tile yet and $\tree_y \cdot y \subseteq \triangleright_T^{N}(x)$, take the least such $n$ and for each such $y \in T^{-n}(x)$, place the tiles $\tree_y \cdot y$; repeat this until there is no such $n$. 
Once this process terminates, the only points that are not covered by a tile must belong to either $B$ or $\triangleright_T^N\cdot x \setminus \triangleright_T^{N-L} \cdot x$, so they comprise at most $\e$ $\coc_x$-fraction of $\triangleright_T^N \cdot x$. 
\end{proof}

\subsection{Poincar\'e recurrence}

Here, we recall some ergodic-theoretic terminology and basic facts, which are used in \cref{subsec:ptwise_erg_along_trees}. A set $W \subseteq X$ is called \dfn{$T$-wandering} if the sets $T^{-n}(W)$, $n \in \N$, are pairwise disjoint. Because the measures of the sets $T^{-n}(W)$ are all equal and $\mu$ is a probability measure, we have:

\begin{obs}\label{conservativity}
$T$ is conservative, i.e., every $T$-wandering measurable set is null.
\end{obs}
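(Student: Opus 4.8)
The plan is to exploit that $T$ is $\mu$-preserving together with finiteness of $\mu$, exactly as the sentence preceding the statement suggests. First I would note that since $T_*\mu = \mu$, we have $\mu(T^{-1}(A)) = \mu(A)$ for every measurable $A \subseteq X$, and iterating gives $\mu(T^{-n}(W)) = \mu(W)$ for all $n \in \N$. Now suppose $W$ is $T$-wandering, so the sets $T^{-n}(W)$, $n \in \N$, are pairwise disjoint. Then countable additivity yields
\[
\sum_{n \in \N} \mu(W) = \sum_{n \in \N} \mu(T^{-n}(W)) = \mu\!\left(\bigsqcup_{n \in \N} T^{-n}(W)\right) \le \mu(X) = 1 < \infty.
\]
A convergent series all of whose terms equal the constant $\mu(W)$ forces $\mu(W) = 0$, which is the claim.

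There is no real obstacle here: the only ingredients are measure-preservation (hence invariance of measure under preimages, which iterates cleanly without any injectivity hypothesis on $T$), countable additivity, and finiteness of the ambient measure. I would keep the write-up to the two displayed lines above.
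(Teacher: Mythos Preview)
Your proof is correct and is exactly the argument the paper gives: the sentence immediately preceding the observation already states that the measures of the sets $T^{-n}(W)$ are all equal and $\mu$ is a probability measure, which is precisely the computation you wrote out. There is nothing to add.
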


For a set $U\subseteq X$, let
\[
[U]_T^+ \defeq \bigcup_{n \in \N^+} T^n(U),
    \hspace{1em}
    [U]_{T^{-1}}^+ \defeq \bigcup_{n \in \N^+} T^{-n}(U),
    \hspace{1em}
    [U]_{T^{-1}} \defeq \bigcup_{n \in \N} T^{-n}(U).
\]
Abusing notation, we write $[x]^+_T$ when $U= \set{x}$. Note that for any set $U$, the set $V \defeq X \setminus [U]_{T^{-1}}$ is closed under $T$, i.e.\ $T(V) \subseteq V$.

Call a set $U \subseteq X$ \dfn{$T$-recurrent} if for every $x \in U$, $[x]_T^+ \cap U\ne \0$; equivalently, $U\setminus [U]_{T^{-1}}^+ = \0$. 
Consequently, we say that a set $U \subseteq X$ is \dfn{$\mu$-nowhere $T$-recurrent} if it does not admit a $T$-recurrent subset of positive measure.

\begin{lemma}[Poincar\'e recurrence]\label{T-recurrence}
Every Borel set $U\subseteq X$ is $T$-recurrent a.e.\ In fact, there is a subset $U' \subseteq U$ that is conull in $U$ such that for every $x \in [U']_{E_T}$, $[x]_T^+ \cap U' \ne \0$.
\end{lemma}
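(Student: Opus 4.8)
The plan is to prove the stronger ``in fact'' statement, since the first sentence is an immediate consequence: if $U' \subseteq U$ is conull in $U$ with the stated property, then in particular every $x \in U'$ satisfies $[x]_T^+ \cap U' \ne \emptyset$, so $U'$ is $T$-recurrent, and $U \setminus U'$ is null. So it suffices to produce such a $U'$. The natural candidate is $U' \defeq U \setminus [W]_{E_T}$ where $W \defeq U \setminus [U]_{T^{-1}}^+$ is the set of ``non-recurrent'' points of $U$, i.e. those $x \in U$ whose strict forward orbit never returns to $U$. First I would show $W$ is null: the point is that $W$ is essentially $T$-wandering — if $x \in W$ and some $T^n(x) \in W$ for $n \ge 1$, then $T^n(x) \in [x]_{T^{-1}}^+$... wait, that's the wrong direction; rather $x \in [T^n(x)]_{T^{-1}}^+$, contradicting $T^n(x) \in W$. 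Hence the forward $T$-orbit of any point of $W$ meets $W$ only at the starting point, which gives that $W$ is a wandering set, or more precisely one shows $T^{-n}(W) \cap W = \emptyset$ for $n \ge 1$ (if $y \in T^{-n}(W) \cap W$ then $T^n(y) \in W$ and $T^n(y) \in [y]_T^+$, so $y \notin W$ since $[y]_T^+ \cap U \ne \emptyset$ — contradiction). So $W$ is $T$-wandering, hence null by \cref{conservativity}.

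Next, since $E_T$ is null-preserving (\cref{assumption}), the $E_T$-saturation $[W]_{E_T}$ is also null; it is Borel by Luzin--Novikov. Set $U' \defeq U \setminus [W]_{E_T}$, which is conull in $U$. It remains to check: for every $x \in [U']_{E_T}$, $[x]_T^+ \cap U' \ne \emptyset$. Fix such an $x$; pick $x' \in U'$ with $x \mathrel{E_T} x'$. Since $x' \in U \setminus [W]_{E_T}$ and $x' \in U$, we have $x' \notin W$, i.e. $x' \in [U]_{T^{-1}}^+$, so there is $n \ge 1$ with $T^n(x') \in U$. Now I'd like to iterate: the forward orbit $x', T(x'), T^2(x'), \dots$ meets $U$ infinitely often, because each time it lands in $U$ it lands at a point still not in $W$ (any forward image of $x'$ is $E_T$-equivalent to $x'$, hence not in $[W]_{E_T}$, hence not in $W$), so it returns again. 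In particular it meets $U'$: each such return point $T^m(x')$ lies in $U$ and is $E_T$-equivalent to $x'$ hence outside $[W]_{E_T}$, so $T^m(x') \in U'$. Finally, since $x \mathrel{E_T} x'$, there are $j, k$ with $T^j(x) = T^k(x')$; choosing a return time $m \ge k$ with $T^m(x') \in U'$, we get $T^{j + (m-k)}(x) = T^m(x') \in U'$, and $j + (m - k) \ge 1$ provided $m > k$ (which we may arrange since there are infinitely many return times). Hence $[x]_T^+ \cap U' \ne \emptyset$.

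The main obstacle I anticipate is the bookkeeping in the last step: matching up the forward orbits of $x$ and $x'$ via the $E_T$-relation and ensuring the exponent on $x$ is strictly positive. This is handled by noting $E_T$-equivalence gives $T^j(x) = T^k(x')$ for \emph{some} $j,k \ge 0$ and that $x'$ (being outside $[W]_{E_T} \cap$-stuff) has \emph{infinitely many} forward return times to $U$, all of which land in $U'$; picking one large enough absorbs the shift $k$ and leaves room to keep the exponent $\ge 1$. Everything else — wanderingness of $W$, nullity via conservativity, null-preservation of the saturation, Borelness via Luzin--Novikov — is routine given the results already established in the excerpt.
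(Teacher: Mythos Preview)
Your proof is correct and follows essentially the same approach as the paper: identify the set $W = U''$ of non-returning points of $U$, show it is $T$-wandering and hence null, remove its saturation, and verify the resulting $U'$ works. The only cosmetic difference is that you remove the full $E_T$-saturation $[W]_{E_T}$ (invoking null-preservation of $E_T$) whereas the paper removes only the backward saturation $[W]_{T^{-1}}$ (invoking null-preservation of $T$); both choices yield a conull $U'$ with the desired property, and your more detailed verification of the final clause is exactly the ``easy to check'' step the paper leaves to the reader.
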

\begin{proof}
The set
$
U'' \defeq \set{x \in U: [x]_T^+ \cap U = \0}
$
is $T$-wandering and hence null. Then $[U'']_{T^{-1}}$ is also null, and it is easy to check that $U' \defeq U \setminus [U'']_{T^{-1}}$ is as desired.
\end{proof}

In light of \cref{T-recurrence}, we may assume that all positively measured sets that come up are $T$-recurrent. 

\begin{lemma}\label{T-invariance}
Every Borel set $U\subseteq X$ with the property that $T(U)\subseteq U$ is such that $[U]_{E_T}=U$ off of a $T$-invariant null set.
\end{lemma}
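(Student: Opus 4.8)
The plan is to reduce the two-sided saturation $[U]_{E_T}$ to a one-sided one and then exploit that $T$ is measure-preserving. First I would note that since $T(U)\subseteq U$, also $T^m(U)\subseteq U$ for every $m\in\N$, and hence $[U]_{E_T}=[U]_{T^{-1}}=\bigcup_{n\in\N}T^{-n}(U)$: indeed, if $x\mathrel{E_T}y$ with $y\in U$, pick $n,m$ with $T^n(x)=T^m(y)\in U$, so $x\in T^{-n}(U)$; the reverse inclusion is immediate because each $T^{-n}(U)\subseteq[U]_{E_T}$.

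Next I would observe that $T(U)\subseteq U$ forces the chain $U\subseteq T^{-1}(U)\subseteq T^{-2}(U)\subseteq\cdots$ to be increasing, so $[U]_{E_T}$ is the increasing union of the sets $T^{-n}(U)$. Since $T$ is $\mu$-preserving, $\mu(T^{-n}(U))=\mu(U)$ for all $n$, and continuity of $\mu$ from below gives $\mu([U]_{E_T})=\mu(U)$. As $U\subseteq[U]_{E_T}$ and $\mu$ is finite, the set $Z\defeq[U]_{E_T}\setminus U$ is null. (Alternatively, $T^{-1}(U)\setminus U$ is $T$-wandering, hence null by \cref{conservativity}, and $Z=\bigcup_{n\geq 1}T^{-(n-1)}\bigl(T^{-1}(U)\setminus U\bigr)$ is then null since $T$ is $\mu$-preserving.)

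Finally, to produce an invariant null exceptional set, I would take $N\defeq[Z]_{E_T}$, which is $E_T$-invariant by construction and null because $E_T$ is null-preserving (see \cref{assumption}). Off of $N$ the sets $U$ and $[U]_{E_T}$ coincide: $U\setminus N\subseteq[U]_{E_T}\setminus N$ trivially, and if $x\in[U]_{E_T}\setminus N$ then $x\notin Z=[U]_{E_T}\setminus U$, whence $x\in U$. This gives $[U]_{E_T}\setminus N=U\setminus N$, as required.

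No step here is a genuine obstacle; the only point needing care is the identity $[U]_{E_T}=[U]_{T^{-1}}$ (which uses $T^m(U)\subseteq U$) together with the observation that these one-sided saturations form an increasing chain precisely because $T(U)\subseteq U$ — this is exactly what lets the measure-preservation of $T$ do its work.
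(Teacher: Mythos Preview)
Your proof is correct. Both you and the paper reach the same conclusion that $Z\defeq[U]_{E_T}\setminus U$ is null and then saturate $Z$ to get the invariant null exceptional set, but you get there by a different (and somewhat more direct) mechanism. The paper argues that every $x\in Z$ is nowhere recurrent in $Z$---since $x$ is $E_T$-equivalent to something in $U$, some forward iterate $T^n(x)$ lands in $U$ and then stays there by $T(U)\subseteq U$, so $T^k(x)\notin Z$ for all large $k$---and then invokes \cref{T-recurrence} to conclude $Z$ is null. You instead first identify $[U]_{E_T}$ with the increasing union $\bigcup_n T^{-n}(U)$ (using $T^m(U)\subseteq U$) and then read off $\mu(Z)=0$ directly from $\mu(T^{-n}(U))=\mu(U)$ and continuity from below. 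Your route avoids appealing to \cref{T-recurrence} altogether and isolates the one-line measure identity $\mu([U]_{E_T})=\mu(U)$; the paper's route, on the other hand, slots naturally into its recurrence framework and would work verbatim in any setting where \cref{T-recurrence} holds, even without the exact measure equality $\mu(T^{-1}U)=\mu(U)$.
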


\begin{proof}
Put $V\defeq [U]_{E_T}\setminus U$. Then since $T(U)\subseteq U$, $V$ is $\mu$-nowhere $T$-recurrent because for all $x\in V$, there are only finitely many $n$ with $T^n(x)\in V$. Hence, by \cref{T-recurrence}, $V$ is null, and since $E_T$ is null-preserving, so is $[V]_{E_T}$. Therefore, $U$ is $E_T$-invariant off of the invariant null set $[V]_{E_T}$.
\end{proof}

We say that the \dfn{periodic part} of $T$ is the subset $\set{x\in X:\exists n<m\in\N: \; T^n(x)=T^m(x)}.$

\begin{lemma}
\label{Periodicity}
$T$ is bijective on its periodic part off of a null set. 
\end{lemma}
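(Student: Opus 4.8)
The plan is to show that the periodic part of $T$,
\[
\]
which I will call $P \defeq \set{x \in X : \exists\, n < m \in \N,\ T^n(x) = T^m(x)}$, coincides modulo a null set with the set $\mathrm{Per}$ of \emph{genuinely periodic} points, and that $T$ is a bijection on $\mathrm{Per}$ outright (no null set needed). Combining these two facts gives the lemma.

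\textbf{Step 1 (structure).} For $k \ge 1$ let $\mathrm{Per}_k \defeq \set{x : T^k(x) = x}$, which is Borel since $T^k$ is, and put $\mathrm{Per} \defeq \bigcup_{k \ge 1}\mathrm{Per}_k$. The observation driving everything is that $T^n(x) = T^m(x)$ with $n < m$ is equivalent to $T^{m-n}\big(T^n(x)\big) = T^n(x)$, i.e.\ $T^n(x) \in \mathrm{Per}$; hence $P = \bigcup_{n \ge 0} T^{-n}(\mathrm{Per})$. Moreover $\mathrm{Per} \subseteq T^{-1}(\mathrm{Per})$ (if $T^k(x)=x$ then $T^k(Tx)=Tx$), so this is an \emph{increasing} union.

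\textbf{Step 2 ($T$ is a bijection on $\mathrm{Per}$).} Surjectivity onto $\mathrm{Per}$: given $y \in \mathrm{Per}$ with $T^k(y)=y$, $k\ge 1$, the point $T^{k-1}(y)$ lies in $\mathrm{Per}$ and maps to $y$; and clearly $T(\mathrm{Per})\subseteq\mathrm{Per}$. Injectivity is the only mildly delicate point, since a priori two distinct periodic cycles might share a point in the image: if $T(x)=T(y)$ with $x,y\in\mathrm{Per}$, choose a common period $k \ge 1$ (e.g.\ the product of the two individual periods), so $T^k(x)=x$ and $T^k(y)=y$; then $x = T^k(x) = T^{k-1}(Tx) = T^{k-1}(Ty) = T^k(y) = y$. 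Thus $T\rest{\mathrm{Per}}$ is a bijection of $\mathrm{Per}$.

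\textbf{Step 3 (the discarded part is null) and conclusion.} Since $T$ is $\mu$-preserving, $\mu(T^{-n}(\mathrm{Per})) = \mu(\mathrm{Per})$ for every $n$, so the increasing union of Step 1 yields $\mu(P) = \mu(\mathrm{Per})$, i.e.\ $P \setminus \mathrm{Per}$ is null. (Alternatively: for $d \ge 1$ the ``distance exactly $d$'' sets $T^{-d}(\mathrm{Per}) \setminus T^{-(d-1)}(\mathrm{Per})$ are $T$-wandering — a point mapped into such a set after $n$ steps lies in the distance-$(n+d)$ set — hence null by \cref{conservativity}, and their union is $P\setminus\mathrm{Per}$.) Therefore, off the null Borel set $P \setminus \mathrm{Per}$, the periodic part of $T$ coincides with $\mathrm{Per}$, on which $T$ restricts to a bijection, which is exactly the assertion. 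The one place requiring care is the global injectivity on $\mathrm{Per}$ in Step 2 — ensuring it holds across all cycles simultaneously, not just along each one — and the common-period trick dispatches it; the rest (the increasing-union/measure-preservation computation) is routine.
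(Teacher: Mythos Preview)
Your proof is correct and follows essentially the same approach as the paper: both identify the set $U = \mathrm{Per}$ of genuinely periodic points, argue that $T\rest{U}$ is a bijection of $U$, and show that $P \setminus U$ is null. The paper's proof is terser --- it just asserts bijectivity on $U$ and obtains nullity of $V \setminus U$ via nowhere-recurrence (\cref{T-recurrence}) --- whereas you spell out the injectivity via a common-period argument and give a more direct nullity argument using that $T$ is pmp on the increasing union $\bigcup_n T^{-n}(\mathrm{Per})$; your alternative wandering-set argument is essentially the paper's recurrence route.
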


\begin{proof}
Let $V$ be the periodic part of $T$, and let $U\defeq \set{x\in X:\exists n\in\N\setminus\set{0}:\; T^n(x)=x}$. Then $[U]_{E_T}=V$. Notice that $V\setminus U$ is nowhere $T$-recurrent, hence null, and that $T\rest{U}$ is bijective. 
\end{proof}

\subsection{Backward ergodic theorem along trees}\label{subsec:ptwise_erg_along_trees}

We think of $\graph(T)$ as a directed graph on $X$, where $X$ is the set of vertices and $\graph(T)$ is the set of directed edges.
For $x\in X$, let $\@T_x$ denote the collection of subtrees of $\graph(T)$ of finite height rooted at $x$ and directed towards $x$ (see \cref{fig:tree} in \cref{subsec:intro:results}). More precisely, $\tree_x \in \@T_x$ exactly when the following three conditions hold:
\begin{enumerate}[(i)]
    \item $\tree_x \subseteq \bigcup_{i=0}^nT^{-i}(x)$ for some $n \in \N$;

    \item $x \in \tree_x$;

    \item if  $y \in \tree_x$ and $y \neq x$ then $T(y) \in \tree_x$.
\end{enumerate}

Notice that if $\Gamma$ is a complete set of Borel partial right inverses of $T$, then $\tree_x \in \@T_x$ exactly when $\tree_x = \tree \cdot x$ for some $\tree \in \@T_\Gamma$. With this in mind, we use graph-theoretic trees ($\tree_x \subseteq \graph(T)$) and set-theoretic trees ($\tree \subseteq \Symb^{< \N}$) interchangeably in the rest of the paper.

\begin{lemma}\label{invariance_of_limsup}
Let $T$ and $\coc$ be as above, and additionally assume that $T$ is aperiodic. Then for any $f \in L^1(X,\mu)$, for a.e.\ $x\in X$, for all $\tree_x \in \@T_x$, we have that  $A_{|f|}^\coc[\tree_x]<\infty$ and
the functions
\[
\fsup(x) \defeq \limsup_{\coc_x(\tree_x) \to \infty} A_f^\coc[\tree_x] 
\text{ and } 
\finf(x) \defeq \liminf_{\coc_x(\tree_x) \to \infty} A_f^\coc[\tree_x],\; \text{where $\tree_x$ ranges over $\@T_x$},
\]
are $T$-invariant a.e.\ (i.e.\ off of a $T$-invariant null set).
\end{lemma}
\begin{proof}
That $A_{|f|}^\coc[\tree_x]<\infty$ for a.e.\ $x \in X$ and each $\tree_x \in \@T_x$, is by \cref{sum_over_columns_is_finite} because $f \in L^1(X,\mu)$.
Thus, we may assume without loss of generality that this holds for all $x \in X$, and in particular, $A_{f}^\coc[\tree_x]$ is well-defined for all $x \in X$.

As for invariance, it is enough to show that $\fsup$ is $T$-invariant as $\finf = - \fsup[(-f)]$. For that, it is enough to show that for each $a \in \Q$, the set 
\[
X_{\ge a} \defeq \set{x \in X : \fsup(x) \ge a} 
\]
is $T$-invariant, modulo a null set. 
Fix $a\in\Q$. 
By \cref{T-invariance}, we just need to show $T(X_{\ge a})\subseteq X_{\ge a}$. 
For this, it suffices to show that $\fsup(x)\leq \fsup(T(x))$ for all $x \in X_{\ge a}$.
Fix $x \in X_{\ge a}$ and put $y \defeq T(x)$. 

Intuitively, since $y$ is only one point, we can add it to heavy trees $\tree \in \@T_x$ without having much impact on the weighted average, hence $\fsup(x)\le \fsup(y)$. 
To see this more formally, recall that $\fsup(x) \ge a > -\infty$ and fix an arbitrary real $S < \fsup(x)$, weight $w > 0$, and error $\e > 0$.
It is enough to find $\tree_{y} \in \@T_{y}$ such that $\coc_x(\tree_{y}) \ge \coc_x(y) \cdot w$ (equivalently, $\coc_{y}(\tree_{y}) > w$) and $A_f^\coc[\tree_{y}] \geq S - \e$.

To this end, take $\tree_x \in \@T_x$ of large enough $\coc_x$-weight so that $\coc_x(\tree_x) \ge \coc_x(y) \cdot w$ and
\[
|S| + |f(y)| \le \frac{\coc_x(\tree_x)}{\coc_x(y)} 
\cdot 
\e
\]
and $A_f^\coc[\tree_x] \ge S$. 
Putting $\tree_{y} \defeq \tree_x \sqcup \set{y}$ (hence, $\tree_{y} \in \@T_{y}$), we have:
\begin{align*}
A_f^\coc[\tree_{y}] 
&=
\left(1 - \frac{\coc_x(y)}{\coc_x(\tree_{y})}\right)
\cdot
A_f^\coc[\tree_x] 
+ 
\frac{\coc_x(y)}{\coc_x(\tree_{y})} 
\cdot
f(y) 
\\
&\ge
S 
- 
\frac{\coc_x(y)}{\coc_x(\tree_{y})} \cdot |S|
- 
\frac{\coc_x(y)}{\coc_x(\tree_{y})} 
\cdot
|f(y)|
\\
&=
S 
- 
\frac{\coc_x(y)}{\coc_x(\tree_{y})} \cdot (|S| + |f(y)|)
\\
&\ge
S 
- 
\frac{\coc_x(y)}{\coc_x(\tree_{y})} 
\cdot
\frac{\coc_x(\tree_x)}{\coc_x(y)} 
\cdot
\e
\\
&\ge
S - \e.
\qedhere
\end{align*}
\end{proof}


\begin{theorem}[Backward pointwise ergodic along trees]\label{backward_erg:trees}
Let $T$ be an aperiodic countable-to-one pmp Borel transformation on a standard probability space $(X,\mu)$, so by \cref{assumption}, $T$ is surjective and $E_T$ is null-preserving. Let $(x,y)\mapsto \coc_x(y) : E_T \to \R^+$ be the Radon--Nikodym cocycle of $E_T$ with respect to $\mu$. For every $f \in L^1(X,\mu)$ and for a.e.\ $x \in X$, we have $A_{|f|}^\coc[\tree_x] < \infty$ for all $\tree_x \in \@T_x$, and
\[
A_f^\coc[\tree_x]  \to \finv(x) \;\text{ as }\; \coc_x(\tree_x)\to \infty,
\]
where $\tree_x$ ranges over $\@T_x$, and $\finv$ is the conditional expectation of $f$ with respect to the $\sigma$-algebra of $T$-invariant Borel sets. 
\end{theorem}


\begin{proof}
By \cref{invariance_of_limsup}, for a.e.\ $x\in X$ and each $\tree_x \in \@T_x$, $A_{|f|}^\coc[\tree_x]<\infty$, and $\fsup$ and $\finf$ are $T$-invariant.
By replacing $f$ with $f - \finv $, we may assume without loss of generality that $\finv = 0$. We will show that $\fsup \leq 0$ a.e., and an analogous argument shows $\finf  \geq 0$ a.e.

Assume towards a contradiction that $\fsup > 0$ on a positively measured (necessarily $T$-invariant) set, restricting to which we might as well assume that $\fsup(x) > 0$ for all $x\in X$. Put $g\defeq \min\set{\frac{\fsup}{2},1}$, so $0<g\leq 1$, and $g\in L^1(X,\mu)$. Put $c\defeq \int g\; d\mu >0$. Fix a complete set $\Gamma$ of Borel partial right inverses of $T$, and let $\@T'_\Gamma \subseteq \@T_\Gamma$ denote the collection of finite trees in $\@T_\Gamma$. Notice that for each $x\in X$, the quantity $\limsup_{\coc_x(\tree \cdot x) \to \infty} A_f^\coc[\tree \cdot x]$ does not change if we restrict the range of $\tree$ to only $\@T_\Gamma'$ since $\sum_{y\in \tree_x}f(y)\coc_x(y)$ converges absolutely for each $\tree_x\in\@T_x$. 

Fix an enumeration $\set{\tree_n}$ of $\@T'_\Gamma$, and define $\ell : X \to \@T'_\Gamma$ by $x \mapsto \tau_n$ where $n \in \N$ is least such that $A_f^\coc[\tree_n \cdot x] > g(x)$ (equivalently, $A_{f-g}^\coc[\tree_n \cdot x] > 0$).

Fix $\delta > 0$ small enough so that for any measurable $Y \subseteq X$, $\mu(Y)<\delta$ implies $\int_Y (f-g)\, d\mu > -\frac{c}{3}$, and let $M\in\N$ be large enough so that the set $Y \defeq f^{-1}(-M,\infty)$ has measure at least $1 - \delta$.

The tiling property (\cref{tiling2}) applied to the function $\ell$ with $\e \defeq \frac{1}{2(M+1)} \frac{c}{3}$ gives $N \in \N$ such that $\mu(Z) \ge 1 - \e$, where $Z$ is the set of all $x\in X$ such that at least $1-\e$ $\coc_x$-fraction of $\triangleright_T^n\cdot x$ is partitioned into sets of the form $\ell(y)\cdot y$.

\begin{claim*}
$A_{\mathbb{1}_Y(f-g)}^\coc[\triangleright_T^n\cdot x] \ge -(M+1) \e$ for each $x \in Z$.
\end{claim*}

\begin{pf}
By the definition of $Z$, on a subset $B \subseteq \triangleright_T^n\cdot x$ that occupies at least $1-\e$ $\coc_x$-fraction of $\triangleright_T^n\cdot x$, the $\coc$-average of $f - g$ is positive, and hence that of $\mathbb{1}_Y (f-g) \rest{B}$ is non-negative. On the remaining set $\triangleright_T^n\cdot x \setminus B$, the function $\mathbb{1}_Y (f-g)$ is at least $-(M+1)$, by the definition of $Y$, and hence so is its $\coc$-weighted average. Thus, the $\coc$-weighted average of $\mathbb{1}_Y (f-g)$ on the entire $\triangleright_T^n\cdot x$ is at least $- (M+1) \e$.
\end{pf}
 
\noindent Now we compute using this claim and \cref{bridge:pmp}:
\begin{align*}
    \int_Y(f-g) \, d\mu  
    &= 
    \int_X A_{\mathbb{1}_Y(f-g)}^\coc[\triangleright_T^n\cdot x]\,d\mu(x)
    \\ 
    &= 
    \int_Z A_{\mathbb{1}_Y(f-g)}^\coc[\triangleright_T^n\cdot x]\,d\mu(x) 
    + 
    \int_{X \setminus Z} A_{\mathbb{1}_Y(f-g)}^\coc[\triangleright_T^n\cdot x] \,d\mu(x) 
    \\
    &\ge 
    -(M+1) \e - (M+1) \e = -2 (M+1) \e = - \frac{c}{3}.
\end{align*}
This gives a contradiction:
\begin{align*}
    0 = \int_X \finv\, d\mu = \int_X f\, d\mu 
    &= 
    c +\int_X (f-g) \, d\mu
    \\
    &= 
    c + \int_Y(f-g) \, d\mu + \int_{X \setminus Y} (f-g) \, d\mu 
    \\
    &> 
    c - \frac{c}{3} - \frac{c}{3} > 0. \qedhere
\end{align*}
\end{proof}

\begin{remark}
It is worth explicitly pointing out particular sequences $(\tree_n)$ of trees in $\@T_\Gamma$ such that $\coc_x(\tree_n\cdot x)\to \infty$ regardless of the base point $x\in X$. Such is the sequence $\triangleright_T^n$; indeed, by \cref{each_column_weight=1}, $\coc_x(\triangleright_T^n\cdot x) = n+1$ for a.e.\ $x \in X$. More generally, this is true for sequences $(\tree_n)$ of trees that contain shifted complete trees whose heights tend to infinity. By this we mean that there is a fixed word $t \in \Symb^{<\N}$ such that $\tree_n$ contains the shifted complete tree $\triangleright_T^{h_n}t$, where $h_n\to \infty$.
\end{remark}

\begin{remark}
By \cref{Periodicity}, $T$ is bijective on its periodic part $Y$ (mod null). By \cref{assumption}, we can assume $E_T\rest{Y}$ is null-preserving and $T\rest{Y}$ is bijective. Consequently, $E_T\rest{Y}$ is measure-preserving, so the backward averages are unweighted (as in the  standard pointwise ergodic theorem for $\Z$). However, the only trees in each orbit of $T\rest{Y}$ are paths of bounded length, so their weights do not tend to infinity. We can still reformulate the statement of our theorem with set theoretic trees instead (i.e. over $\tree \cdot x$ where $\tree \in \@T_\Gamma$ for some complete set $\Gamma$ of Borel partial right inverses of $T$) and the statement would still be true without the aperiodicity assumption.
\end{remark}

If $T$ and $f$ are as in \cref{backward_erg:trees}, then for a.e.\ $x\in X$, the set $\set{\frac{1}{n}\sum_{i<n}f(T^i(x)):n\in\N}$ of forward averages is bounded (since this is a convergent sequence). Looking backward, our \cref{backward_erg:trees} also says that the averages $A_f^\coc[\tree_x]$ over $\tree_x \in \@T_x$ converge as $\coc_x(\tree_x) \to \infty$, but there are infinitely-many trees $\tree_x \in \@T_x$ of bounded weight, so the mere convergence does not imply that the set $\set{A_f^\coc[\tree_x] : \tree_x \in \@T_x}$ is bounded. Nevertheless, we show it is indeed bounded; in fact, the maximal ergodic theorem holds along backward trees (see \cite{KP} for the classical forward version). We use the boundedness in the proof of \cref{intro:forward_erg:trees}, but of course, the maximal ergodic theorem is interesting in its own right.

\begin{theorem}[Backward maximal ergodic theorem along trees]\label{maximal_ergodic}
Let $T$ be as in \cref{backward_erg:trees}. Let $f \in L^1(X,\mu)$, and define $f^*(x) \defeq \sup_{\tree_x \in \@T_x} A_f^\coc[\tree_x]$ for each $x \in X$. Then for any $\lambda \in \R$,
\[
\int_{f^*>\lambda}f\;d\mu\ge \lambda\mu\set{f^*>\lambda}.
\]
In particular, $f^* < \infty$ a.e.
\end{theorem}
\begin{proof}
Fix $\lambda \in \R$ and let $Y \defeq \set{x\in X:f^*>\lambda}$. We will show $\int_Y f\;d\mu\ge \lambda\mu(Y)$.
First note that this is equivalent to showing $\int_Y(f-\lambda)\;d\mu\ge -\e$ for arbitrary $\e>0$.

For each $x \in Y$, let $\tree_x \in \@T_x$ be a minimal witness to $x$ being in $Y$, i.e., $A_f^\coc[\tree_x]>\lambda$, and no proper subtree of $\tree_x$ has this property. Hence, for any $y\in \tree_x$, $y$ is also in $Y$ (since $\tree_x\cap \bigcup_{i\in \N}T^{-i}(y)$ has average greater than $\lambda$ by the minimality of $\tree_x$). Then $A^\coc_{\1_Y(f-\lambda)}[\tree_x]=A^\coc_{(f-\lambda)}[\tree_x]>0$ for each $x\in Y$. For $x\notin Y$, set $\tree_x\defeq\set{x}$. 

The rest of the proof is morally the same as that of \cref{backward_erg:trees}, so we will not provide all of the details. We may assume without loss of generality (by the same argument as in \cref{backward_erg:trees}) that $f$ is bounded from below. We apply the tiling property (\cref{tiling2}) to get $N$ large enough so that for each point $x$ in a set $Z$ of large measure (which will depend on the lower bound of $f$), we can tile most of $\triangleright_T^N \cdot x$ with tiles of the form $\tree_y$, which, together with the lower bound for $f$ guarantees that $A^\coc_{\1_Y(f-\lambda)}[\triangleright_T^N \cdot x]\ge -\frac{\e}{2}$. Hence, by the local-global bridge (\cref{bridge:pmp}),
\begin{align*}
\int_Y(f-\lambda)\;d\mu &= \int A^\coc_{\1_Y(f-\lambda)}(\triangleright_T^N \cdot x)\; d\mu \\
&= \int_Z A^\coc_{\1_Y(f-\lambda)}(\triangleright_T^N \cdot x)\; d\mu + \int_{X\setminus Z} A^\coc_{\1_Y(f-\lambda)}(\triangleright_T^N \cdot x)\; d\mu \\
&\ge -\frac{\e}{2}+\int_{X\setminus Z} A^\coc_{\1_Y(f-\lambda)}(\triangleright_T^N \cdot x)\; d\mu.    
\end{align*}
By taking $Z$ to have arbitrarily large measure, we get $\int_Y(f-\lambda)\; d\mu\ge-\e$.
\end{proof}

\subsection{Convergence in $L^p$ along special sequences of trees}

Besides pointwise convergence, we also get convergence in $L^p$ along the sequence of complete trees $\triangleright_T^n$. This is the content of \cref{intro:backward_erg:triangles}, which we restate and prove here. We also remark afterwards that the theorem holds for other special sequences of trees as well.

\smallskip

Recall (\cref{triangle_Markov_Lp-contraction}) that for each $n$, the operator $\triangle_{T,n}$ on $L^1(X,\mu)$ defined by $\triangle_{T,n}f(x) \defeq A_f^\coc[\triangleright_T^n \cdot x]$ is a Markov operator, which is an $L^p$-contraction for all $1 \le p \le \infty$.

\begin{cor}[Backward ergodic along complete trees]\label{backward_erg:triangles}
Let $T$ and $\coc$ be as in \cref{backward_erg:trees}. For any $1 \leq p < \infty$ and $f \in L^p(X,\mu)$,
\[
\lim_{n \to \infty} \triangle_{T,n} f = \finv \;\text{ a.e.\ and in $L^p$},
\]
where $\finv$ is the conditional expectation of $f$ with respect to the $\sigma$-algebra of $T$-invariant Borel sets.
\end{cor}

\begin{proof}
The pointwise convergence follows immediately from \cref{backward_erg:trees} by considering, for $x \in X$, the sequence $\triangleright_T^n \cdot x$ of complete trees, recalling that by \cref{each_column_weight=1}, $\coc_x(\triangleright_T^n \cdot x) = n+1 \to \infty$.

If $f \in L^\infty(X,\mu)$, then $|\triangle_{T,n} f| \le \norm{f}_\infty$ for every $n \in \N$, so by the dominated convergence theorem, $\triangle_{T,n} f$ converges in $L^p$ to $\finv$ as $n \to \infty$.

For a general $f\in L^p(X,\mu)$, let $(f_k)$ be a sequence of bounded functions converging to $f$ in $L^p$. Fix $\e>0$, and let $k$ be large enough so that $\|f-f_k\|_p<\frac{\e}{3}$. This implies, for all $n \in \N$, that $\|\triangle_{T,n} f - \triangle_{T,n} f_k\|_p < \frac{\e}{3}$ and $\|\finv - \finv_k\|_p < \frac{\e}{3}$ because both $\triangle_{T,n}$ and conditional expectation are $L^p$-contractions (\cref{triangle_Markov_Lp-contraction} and \cite[Theorem 4.1.11]{Dur}). Thus,
\begin{align*}
    \norm{\triangle_{T,n} f - \finv}_p 
    \leq& 
    \norm{\triangle_{T,n} f - \triangle_{T,n} f_k}_p 
    + 
    \norm{\triangle_{T,n} f_k - \finv[f_k]}_p
    + 
    \norm{\finv[f_k] - \finv}_p
    \\
    <& 
    \frac{\e}{3}
    + 
    \norm{\triangle_{T,n} f_k - \finv[f_k]}_p 
    + 
    \frac{\e}{3} 
    < \e,
\end{align*}
for large enough $n$ because we already know that $\lim_{n \to \infty} \norm{\triangle_{T,n} f_k - \finv[f_k]}_p = 0$.
\end{proof}

Recalling \cref{adjoint_calc,avg-backop=triangle}, we now restate \cref{backward_erg:triangles} in terms of the adjoint $K_T^*$ of the Koopman representation $K_T$ of $T$.

\begin{cor}\label{ergodic-for-adjoint}
Let $T$ be as in \cref{backward_erg:trees}. For any $1\le p<\infty$ and $f \in L^p(X,\mu)$,
\[
\lim_{n \to \infty} \frac{1}{n+1} \sum_{i=0}^n (K_T^*)^i (f) = \overline{f} \;\text{ a.e.\ and in $L^p$},
\]
where $\finv$ is the conditional expectation of $f$ with respect to the $\sigma$-algebra of $T$-invariant Borel sets.
\end{cor}

\begin{remark}\label{remark_operators}
The mere convergence of the sequence of averages in \cref{ergodic-for-adjoint} is implied by the Dunford--Schwartz ergodic theorem \cite{DunSch}:
\end{remark}

\begin{theorem}[Dunford--Schwartz 1956]\label{Dunford-Schwartz}
If $Q : L^1(X,\mu) \to L^1(X,\mu)$ is a non-negative $L^1$-$L^\infty$-contraction\footnote{This means both an $L^1$-contraction and an $L^\infty$-contraction.} on a probability space $(X,\mu)$, then for every $f \in L^1(X,\mu)$, there exists a $Q$-invariant $\hat{f} \in L^1(X,\mu)$ such that $\frac{1}{n+1} \sum_{i=0}^n Q^i f \to \hat{f}$ as $n \to \infty$ both a.e.\ and in $L^1(X,\mu)$.
\end{theorem}

\noindent Indeed, by \cref{backop_Markov_Lp-contraction,adjoint_calc} $Q \defeq K_T^*$ is an $L^1$-$L^\infty$-contraction, so the sequence $\frac{1}{n}\sum_{i<n}(K_T^*)^i(f)$ converges a.e.\ and in $L^1(X,\mu)$ to a $K_T^*$-invariant function $\hat{f}\in L^1(X,\mu)$. However, the $K_T^*$-invariance of $\hat{f}$ does not directly imply that $\hat{f}$ is $T$-invariant (and hence one cannot conclude that $\hat{f}$ is the conditional expectation $\finv$ with respect to the $T$-invariant $\sigma$-algebra of Borel sets). The main new content of \cref{ergodic-for-adjoint} is that $\hat{f}$ is indeed $T$-invariant. To prove this, we rephrased it in terms of averages over complete backward trees (\cref{backward_erg:triangles}) and proved the stronger statement of \cref{backward_erg:trees} that the averages over \textit{arbitrary} backward trees converge.

Lastly, we discuss a more general version of \cref{backward_erg:triangles}, replacing the sequence of complete backward trees with that of ``fat'' backward trees. Let $T$ and $\coc$ be as in \cref{backward_erg:trees} and let $\Gamma$ be a complete set of Borel partial right inverses of $T$. For $c > 0$, call a tree $\tree \subseteq \@T_\Gamma$ \dfn{$c$-fat} for the cocycle $\coc$ if $\frac{\coc_x(\tree \cdot x)}{h(\tree) + 1} \ge c$ for a.e.\ $x \in X$. In particular, $\Gamma^{\le n}$ is $1$-fat and \cref{backward_erg:triangles} is about the averaging operators $\^{\Gamma^{\le n}}$, defined in \cref{subsec:right-inverses}.

\begin{cor}[Backward ergodic theorem along fat trees]\label{backward_erg:fat-trees}
Let $T$, $\coc$, and $\Gamma$ be as above. Let $c > 0$ and let $(\tree_n)$ be any sequence of trees in $\@T_\Gamma$ that are $c$-fat for $\coc$ and such that $h(\tree_n) \to \infty$ as $n \to \infty$. Then for any $1 \leq p < \infty$ and $f \in L^p(X,\mu)$,
\[
\lim_{n \to \infty} \^\tree_n f = \finv \;\text{ a.e.\ and in $L^p$},
\]
where $\finv$ is the conditional expectation of $f$ with respect to the $\sigma$-algebra of $T$-invariant Borel sets.
\end{cor}
\begin{proof}
Replacing the operators $\triangle_{T,n}$ with $\^\tree_n$, the argument is word-for-word the same as for \cref{backward_erg:triangles}, except that the operators $\^\tree_n$ may not be $L^p$-contractions, but by \cref{S_avg_Lp-bounded}, their norms are uniformly bounded by $c$ (independent of $n$), which is all the argument needs.
\end{proof}

\begin{question}\label{q:all_trees}
For a complete set $\Gamma$ of Borel partial right inverses of $T$, for which sequences $(\tree_n)$ of trees in $\@T_\Gamma$ do we have convergence in $L^p$ of the $\coc$-weighted averages over $\tree_n \cdot x$?
\end{question}


\section{Applications to shift maps}\label{sec:Markov_chains}

An example of a countable-to-one Borel transformation is the shift map $\shift : \Symb^\N \to \Symb^\N$, for some countable set $\Symb$, where $\shift(x) \defeq \big(x(1 + n)\big)_{n \in \N}$. See \cref{fig:2-shift} for the depiction of $\shift$ for $\Symb \defeq 2 \defeq \set{0,1}$.

\begin{figure}[htp]
    \centering
    \includegraphics[width=7cm]{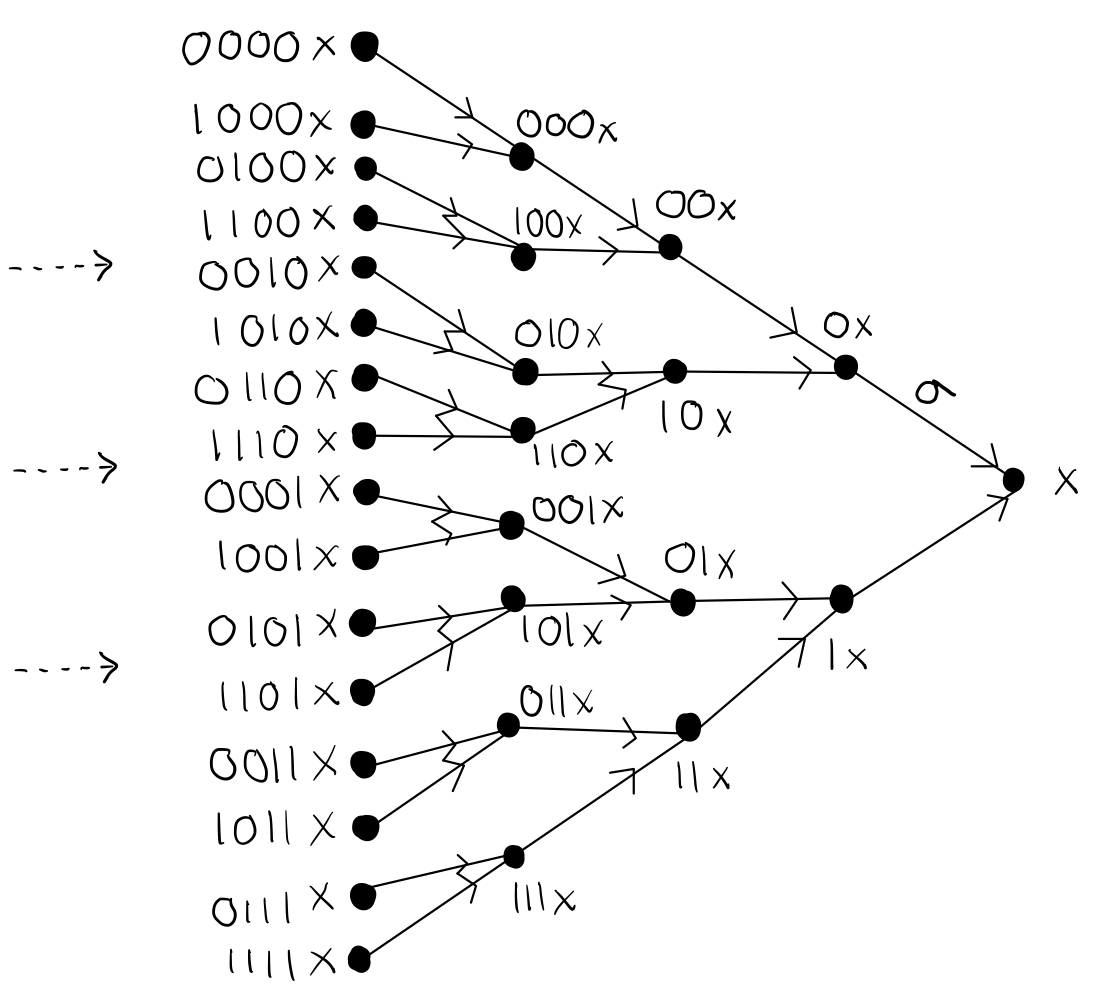}
    \caption{Shift on $2^\N$}
    \label{fig:2-shift}
\end{figure}

With the exception of the first example, the measures on $\Symb^\N$ discussed in this section will be Markov measures. Indeed, Markov measures on the state space $\Symb$ provide a large class of Borel probability measures on $\Symb^\N$, including shift-invariant ones (which are exactly those with stationary initial distribution). Thus, \cref{backward_erg:trees} says that averaging a function while walking backward in the directions according to a tree $\tree \in \@T_\Symb$ approximates the conditional expectation of the function with respect to the $\sigma$-algebra of shift-invariant Borel sets (\cref{fig:2-shift} depicts all backward walks of length $3$ for $\Symb \defeq \set{0,1}$). 


\subsection{Example:\ the Gauss map}
Let $X\defeq (0,1]$ and $\mu$ be the Gauss measure, i.e., $d\mu \defeq \frac{1}{(\log2)(1+x)}\;d\lambda$, where $\lambda$ is Lebesgue measure. Let $T : X \to X$ be the Gauss map $x\mapsto \frac{1}{x}\text{ mod }1$. For $x \in \N^\N$, we denote by $[x]$ the real in $(0,1]$ whose continued fraction expansion is $x$. The map $x \mapsto [x] : \N^\N \to (0,1]$ is an equivariant isomorphism of the shift $\shift$ on $\N^\N$ and the Gauss map $T$ on $(0,1]$.
In particular, $T$ is countable-to-one. Moreover, $T$ is $\mu$-preserving and ergodic (see \cite{Kea}), so \cref{backward_erg:trees,backward_erg:triangles,maximal_ergodic} apply. For concrete applications of this theorem, it is useful to have an explicit formula for the Radon--Nikodym cocycle of $E_T$ with respect to the Gauss measure $\mu$. For each $j \in \N^+$ and $x \in \N^\N$, let\footnote{This formula was obtained by Federico Rodriguez-Hertz by change of variable trickery.}
\[
\coc_{[x]}([j,x]) \defeq \frac{1+[x]}{([x]+j)([x]+j+1)} = (1+[x]) [j,x] [j+1, x].
\]
This induces a cocycle on $E_T$ via the cocycle identity. One can simply check that this fits the definition of the Radon--Nikodym cocycle of $E_T$ with respect to $\mu$. In particular, the adjoint $K_T^*$ of the Koopman representation is given by the formula:
\[
K_T^* f (x) = \sum_{j \in \N^+} f([j,x]) (1+[x]) [j,x] [j+1, x]
\]
for $f \in L^1(X,\mu)$ and $x \in X$ (see \cref{adjoint_calc}). Furthermore, the averages of iterates of $K_T^*$ converge to the expectation a.e.\ and in $L^p$ for all $p \ge 1$ (see \cref{ergodic-for-adjoint}). 


\subsection{Preliminaries on Markov measures}
Here we give some basics of Markov measures, referring the reader to \cite{Dur} for a more comprehensive exposition of this subject.

Let $\Symb$ be a countable discrete state space. For $w \in \Symb^{<\N}$, let $|w|$ denote the length of $w$, and for a finite or infinite word $w' \in \Symb^{<\N} \cup \Symb^\N$, let $w \conc w'$ denote the concatenation of $w$ and $w'$ (i.e.\ the word $w$ followed by $w'$), and for $\finS \subseteq \Symb^{<\N}$, let $\finS \conc w \defeq \set{v \conc w : v \in \finS}$.

An $\Symb \times \Symb$ (row) stochastic matrix\footnote{A square matrix with nonnegative entries whose rows add up to $1$.} $P$ is called \dfn{irreducible} if for each $i,j \in \Symb$ there is $n \ge 1$ such that the $(i,j)$ entry of $P^n$ is positive. Call a probability distribution $\pi$ on $\Symb$ \dfn{stationary} for the matrix $P$ if treating $\pi$ as a row-vector, we have $\pi P = \pi$.

The \dfn{Markov measure} on $\Symb^{<\N}$ with \dfn{transition matrix} $P$ and \dfn{initial distribution} $\pi$ is the measure $\m$ on $\Symb^{<\N}$ defined by 
\[
\m(w) \defeq \pi\big(w(0)\big) \cdot P\big(w(0), w(1)\big) \cdots P\big(w(\ell-2), w(\ell-1)\big),
\]
for each nonempty word $w \in \Symb^{<\N}$, and $\m(\0) \defeq 1$. Note that $\m$ is a probability distribution on $\Symb^n$ for each $n \ge 0$. We say that $\m$ is \dfn{irreducible} (resp.\ \dfn{stationary}) if $P$ is irreducible (resp.\ $\pi$ is a stationary distribution for $P$).

\begin{assumption}\label{assumption:Markov}
We assume throughout that the initial distribution $\pi$ of every Markov measure is \textbf{positive} (i.e.\ all of its entries are positive).
\end{assumption}

Moving to infinite words, we equip $\Symb^\N$ with the standard Borel structure induced by the product topology, where $\Symb$ is discrete. In particular, the cylindrical sets
\[
[w] \defeq \set{x \in \Symb^\N : w \text{ is an initial subword of } x},
\]
$w \in \Symb^{< \N}$, are clopen and form a basis for the topology. Any Markov measure $\m$ on $\Symb^{<\N}$ induces a probability measure $\P_\m$ on $\Symb^\N$ uniquely defined by $\P_\m[w] \defeq \m(w)$ for each $w \in \Symb^{<\N}$. We also refer to $\P_\m$ as a \dfn{Markov measure} on $\Symb^\N$.

The following proposition records the basic connections between the properties of $\m$ and $\P_\m$ that we use in our arguments; the proofs of these connections are standard, see \cite[5.5]{Dur}.

\begin{prop}\label{Markov-facts}
Let $\m$ be a Markov chain on $\Symb$ and let $\shift$ denote the shift map on $\Symb^\N$.

\begin{enumerate}[label=\normalfont{(\alph*)}, leftmargin=*, itemsep=4pt]
    \item \label{Markov:null-preserving} If the initial distribution of $\m$ is positive (\cref{assumption:Markov}), then $\shift$ is $\P_\m$-null-preserving if and only if the transition matrix of $\m$ does not have a zero column.
    
    \item \label{Markov:pmp} $\shift$ is $\P_\m$-preserving if and only if $\m$ is stationary.
    
    \item \label{Markov:ergodic} $\shift$ is $\P_\m$-ergodic if $\m$ is irreducible and its transition matrix admits a stationary distribution.
\end{enumerate}
\end{prop}


\subsection{Backward pointwise ergodic theorem for Markov measures}

If a Markov measure $\m$ has positive initial distribution (\cref{assumption:Markov}) and its transition matrix has no zero columns, then the shift map $\shift$ is $\P_\m$-null-preserving (\cref{Markov-facts}\labelcref{Markov:null-preserving}). 
In fact, $E_\shift$ is null-preserving on the conull set
\[
Y \defeq \set{x \in \Symb^\N : P(x(n), x(n+1)) > 0 \text{ for all } n \in \N}.
\]
On this conull set $Y$, we explicitly calculate the Radon--Nikodym cocycle of $E_\shift$ with respect to $\P_\m$.

\begin{prop}\label{correctcocycle}
Let $\m$ be a Markov chain on $\Symb$ whose initial distribution is positive (\cref{assumption:Markov}) and whose transition matrix has no zero columns. 
Then the Radon--Nikodym cocycle of $E_\shift$ with respect to $\P_\m$ is given by 
\[
\coc_{\shift^n(x)}(x) = \frac{\m\big(x(0) \conc x(1) \conc \dots \conc x(n)\big)}{\m\big(x(n)\big)}
\]
for all $n \in \N$ and a.e.\ $x \in \Symb^\N$.
\end{prop}

\begin{proof}
To see that $\coc$ is the Radon--Nikodym cocycle with respect to $\P_\m$, it suffices to check that for all $w \in \Symb^{<\N}$ and $i \in \Symb$ with $\m(i \conc w) > 0$, we have 
\[
\P_\m([i \conc w]) = \int_{[w]} \coc_x(i \conc x) \;d\P_\m(x).
\]

Let $P$ and $\pi$ be the transition matrix and the initial distribution of $\m$. 
If $w \ne \0$, then $\coc_x(i \conc x)=\frac{\pi(i)}{\pi(w(0))}P(i,w(0))$ for a.e.\ $x \in [w]$. 
Hence,
\[
    \int_{[w]}\coc_x(i \conc x)\;d\P_\m(x) 
    = 
    \frac{\pi(i)}{\pi(w(0))} \cdot P(i,w(0))\cdot \P_\m[w]
    = 
   \P_\m([i \conc w]).
\]

Lastly, if $w = \0$, then 
\begin{align*}
    \int_{[w]}\coc_x(i \conc x)\;d\P_\m(x) 
    &=
    \int_{\Symb^\N} \coc_x(i \conc x)\;d\P_\m(x)
    \\
    &=
    \sum_{j\in \Symb}\int_{[j]}\coc_x(i \conc x)\;d\P_\m(x)
    \\
    &= \sum_{j\in \Symb} \frac{\pi(i)}{\pi(j)}P(i,j)\P_\m([j])
    \\
    &= 
    \pi(i) \sum_{j\in \Symb} P(i,j) 
    \\
    &= 
    \pi(i) 
    = 
    \P_\m([i]) 
    = 
    \P_\m([i \conc w]).
    \qedhere
\end{align*}
\end{proof}

We now state \cref{backward_erg:trees,backward_erg:triangles,maximal_ergodic} for the shift map on $\Symb^\N$ with a Markov measure, using
\cref{Markov-facts}\labelcref{Markov:pmp} and \cref{correctcocycle}. Recall that $\@T_\Symb$ denotes the set of right-rooted set-theoretic trees on $\Symb$ (see \cref{subsec:trees_tiling}).


\begin{cor}[Pointwise ergodic property for Markov measures]\label{ergodic_Markov}
Let $\m$ be a stationary Markov measure on $\Symb^{<\N}$. For every $1 \le p < \infty$ and $f \in L^p(\Symb^\N,\P_\m)$, we have the following.

\begin{enumerate}[(a), leftmargin=2em,itemsep=8pt]
    \item \emph{Pointwise convergence along arbitrary trees:} 
    
    \smallskip
    
    \noindent For a.e.\ $x \in \Symb^\N$,
    \[
    \frac{1}{\m(\tree \conc x(0))}\sum_{w \in \tree}f(w\conc x)\m(w\conc x(0)) \to \finv \;\text{ as }\; \m(\tree \conc x(0))\to \infty,
    \]
where $\tree$ ranges over $\@T_\Symb$, and $\finv$ is the conditional expectation of $f$ with respect to the $\sigma$-algebra of shift-invariant Borel sets.

\item \emph{$L^p$ convergence along complete trees:} 

\smallskip

\noindent The functions $x \mapsto \frac{1}{(n+1) \m(x(0))} \sum_{w \in \Symb^{\le n}} f(w \conc x) \m(w \conc x(0))$ converge to $\finv$ both a.e.\ and in $L^p$.

\item \emph{Maximal ergodic theorem along arbitrary trees:} 

\smallskip

\noindent Letting $f^*(x) \defeq \sup_{\tree \in \@T_\Symb} \frac{1}{\m(\tree \conc x(0))}\sum_{w \in \tree}f(w\conc x)\m(w\conc x(0))$, we have
\[
\int_{f^*>\lambda}f\;d\mu\ge \lambda\mu\set{f^*>\lambda},
\]
for any $\lambda \in \R$. In particular, $f^* < \infty$ a.e.
\end{enumerate}
\end{cor}

\subsection{Bernoulli shifts}

For a finite state space $\Symb$, the simplest example of a Markov measure $\m$ on $\Symb^{<\N}$ to which \cref{ergodic_Markov} applies and for which the shift map $\shift$ is $\P_\m$-ergodic is the one whose initial distribution $\pi$ is uniform and the transition matrix is constant, i.e.\ all entries are equal to $\frac{1}{|\Symb|}$. Indeed, in this case $\P_\m$ is just the product measure $\pi^\N$ and the Radon--Nikodym cocycle of $E_\shift$ with respect to $\P_\m$ is given by $\coc_{\shift(x)}(x)\defeq \frac{1}{|\Symb|}$ for all $x \in \Symb^\N$.

We can also view the sequences $x\in \Symb^\N$ as $|\Symb|$-ary representations of $x\in [0,1)$. Thus, $\shift$ is
the same as the so-called \dfn{baker's map} $T : [0,1) \to [0,1)$ given by $x \mapsto |\Symb| \cdot x \text{ mod }1$, with Lebesgue measure on $[0,1)$.

\subsection{Boundary actions of free groups}\label{subsec:boundary_actions}

For $1 \le r \leq \infty$, let $\F_r$ be the free group on $r$ generators $\set{b_i}_{i<r}$ and let $\Symb_r \defeq \set{a_i}_{i<2r}$, where $a_{2i} \defeq b_i$ and $a_{2i+1} \defeq b_i^{-1}$ for each $i < r$. We recall that the boundary $\partial\F_r$ can be viewed as the set of all infinite reduced\footnote{A finite or infinite word $w$ on the set $\Symb_r$ is called \dfn{reduced} if a generator and its inverse do not appear side-by-side in $w$.} words in $\Symb_r$. This makes $\partial \F_r$ a closed subset of $\Symb_r^\N$. 

The group $\F_r$ has a natural (boundary) action $\F_r \actson^\beta \partial \F_r$ by concatenation and cancellation: for $w \in \F_r$ and $x \in \partial \F_r$, $w \cdot x \defeq (w \conc x)^*$, where the latter denotes the reduction of the word $w \conc x$. This action is free on the (cocountable) set of \dfn{aperiodic} words\footnote{A word on a set $\Symb$ of symbols is called \dfn{periodic} if it is of the form $w \conc v v v \dots$ for some $w,v \in \Symb^{<\N}$.}.

The main relevant fact about this action is that its orbit equivalence relation is the same as that of the shift $\shift : \partial \F_r \to \partial \F_r$; in fact, for $x \in \partial \F_r$, $\shift(x) = x(0)^{-1} \cdot x$, and conversely, for any $a \in \Symb_r$ and $x \in \partial \F_r$ with $x(0) \ne a^{-1}$, $a \cdot x = a \conc x \in \shift^{-1}(x)$. Thus, for $x \in \partial \F_r$ and $n \in \N$, we have $\triangleright_\shift^n \cdot x = B_n^{x(0)} \cdot x$, where $B_n^{x(0)}$ is the set of all reduced words of length at most $n$ that do not end with $x(0)^{-1}$. Therefore, applying \cref{ergodic_Markov} to an appropriate class of Markov measures on $\Symb_r^{<\N}$ yields a pointwise ergodic theorem (\cref{intro:forward_erg:boundary}, restated below as \cref{forward_erg:boundary}) for the boundary action $\F_r \actson^\beta \partial \F_r$.

To translate the conclusion of \cref{ergodic_Markov} into a statement about the boundary action $\F_r \actson^\beta \partial \F_r$, we need the support of the Markov measure $\P_\m$ on $\Symb_r^\N$ to be contained in $\partial \F_r$. This is the same as requiring that the support of $\m$ is contained in $\F_r$, which is equivalent to the transition matrix $P$ of $\m$ satisfying
\[
    P(a,a^{-1}) = 0 \text{ for all } a \in \Symb_r.
\]
Lastly, we denote by $\@T_{\Symb_r}^\bullet$ the set of all finite height subtrees of the (left) Cayley graph of $\F_r$ containing the identity.

\begin{cor}[Pointwise ergodic for boundary actions of free groups]\label{forward_erg:boundary}
Let $1 \le r \le \infty$ and let $\Symb_r$ be the standard symmetric set of generators of $\F_r$. 
Let $w \cdot x$ denote the boundary action of $w \in \F_r$ on $x \in \partial \F_r$. 
Let $\m$ be a stationary Markov measure on $\Symb_r^{<\N}$ whose support is contained in $\F_r$. 
For every $1 \leq p < \infty$ and $f \in L^p(\partial\F_r,\P_\m)$, we have the following.

\begin{enumerate}[(a), leftmargin=2em,itemsep=8pt]
    \item \emph{Pointwise convergence along arbitrary trees:}
    
    \smallskip
    
    \noindent For a.e.\ $x \in \partial \F_r$,
    \[
    \frac{1}{\m(\tree \conc x(0))} \sum_{w\in \tree}f(w\cdot x)\m(w\conc x(0)) \to \finv(x) \;\text{ as }\; \m(\tree) \to \infty,
    \]
    where $\tree$ ranges over all trees in $\@T_{\Symb_r}^\bullet$ not containing $x(0)^{-1}$, and $\finv$ is the conditional expectation of $f$ with respect to the $\sigma$-algebra of $\beta$-invariant Borel sets.

\item \emph{$L^p$ convergence along complete trees:} 

\smallskip

\noindent The functions $x \mapsto \frac{1}{(n+1)\m(x(0))} \sum_{w\in B_n^{x(0)}} f(w\cdot x) \m(w\conc x(0))$ converge to $\finv$ both a.e.\ and in $L^p$.

\item \emph{Maximal ergodic theorem along arbitrary trees:} 

\smallskip

\noindent Letting
\[
f^*(x) \defeq \sup \set{\frac{1}{\m(\tree \conc x(0))} \sum_{w\in \tree}f(w \cdot x)\m(w \conc x(0)) : \tree \in \@T_{\Symb_r}^\bullet \text{ and } x(0)^{-1} \notin \tree},
\]
we have
\[
\int_{f^*>\lambda}f\;d\mu\ge \lambda\mu\set{f^*>\lambda},
\]
for any $\lambda \in \R$. In particular, $f^* < \infty$ a.e.
\end{enumerate}
\end{cor}

We now provide explicit examples of Markov measures $\m$ on $\Symb_r^{<\N}$ to which \cref{forward_erg:boundary} applies and for which the boundary action $\F_r \actson^\beta \partial \F_r$ is $\P_\m$-ergodic.

\subsubsection{An example for $r < \infty$}\label{r<infty}

For $r < \infty$, let $\m_u$ denote the uniform Markov measure on $\F_r$ (the nonbacktracking simple symmetric random walk on $\F_r$). That is, the initial distribution $\pi$ is uniform (constant $\frac{1}{2r}$), and the transition matrix $P$ is defined by setting $P(a,a^{-1}) \defeq 0$ and $P(a,b) \defeq \frac{1}{2r-1}$ for all $a,b \in \Symb_r$ with $b \ne a^{-1}$. It is easy to check that $\pi$ is a stationary distribution for $P$ and $P$ is irreducible, hence $\m_u$ satisfies the hypothesis of \cref{forward_erg:boundary} and the shift map $\shift$ is $\P_{\m_u}$-ergodic.

\subsubsection{An example for $r = \infty$}

Recall that $\F_\infty = \gen{b_i}_{i < \infty}$ and $\Symb_\infty = \set{a_i}_{i<\infty}$, where $a_{2i} \defeq b_i$ and $a_{2i+1} \defeq b_i^{-1}$ for each $i < \infty$. Define an $\Symb_\infty \times \Symb_\infty$ matrix $P$ by setting its $a_i^\text{th}$ row to be the sequence $(\frac{1}{2^{j+1}})_{j < \infty}$ with a zero inserted for the entry corresponding to $a_i^{-1}$. More precisely, for all $i,j < \infty$,
\[
P(a_{2i},a_j)\defeq\begin{cases}
\frac{1}{2^{j+1}} &\text{if } j<2i+1
\\
0 &\text{if } j=2i+1
\\
\frac{1}{2^j} &\text{if }j>2i+1
\end{cases}
\;\text{ and }\;
P(a_{2i+1},a_j)\defeq\begin{cases}
\frac{1}{2^{j+1}} &\text{if }j<2i
\\
0 &\text{if }j=2i
\\
\frac{1}{2^j} &\text{if }j>2i.
\end{cases}
\]
Again, $P$ is irreducible, and one can check that every $a \in \Symb_\infty$ is a positive recurrent state (see \cite[paragraph above Theorem 5.5.12]{Dur} for the definition), so $P$ admits a positive stationary distribution $\pi$ by \cite[Theorems 5.5.11 and 5.5.12]{Dur}. Thus, the Markov measure $\m$ on $\Symb_\infty^{<\N}$ with transition matrix $P$ and initial distribution $\pi$ satisfies the hypothesis of \cref{forward_erg:boundary} and the shift map $\shift$ is $\P_\m$-ergodic.


\section{Application to pmp actions of free groups}\label{sec:pmp_actions}

Let $(X,\mu)$ be a standard probability space, and let $\F_r$ be the free group on $r$ generators, where $2 \le r < \infty$. 
As in \cref{subsec:boundary_actions}, let $\Symb_r \defeq \set{a_i}_{i<2r}$ be the standard symmetric set of generators and let $\partial\F_r$ be the boundary of $\F_r$ (i.e.\ the set of all infinite reduced words in $\Symb_r$).
Let $\m_u$ be the uniform Markov measure as in \cref{r<infty} whose initial distribution $\pi$ is the constant $\frac{1}{2r}$ vector and whose transition matrix $P$ is such that for all $i,j<r$, $P(a_i,a_j)=\frac{1}{2r-1}$ if $a_i\neq a_j^{-1}$, and $P(a_i,a_i^{-1})=0$ otherwise. 
In particular, for a word $w\in\F_r$ of length $n\geq 1$, $\m_u(w) = \frac{1}{(2r)(2r-1)^{n-1}}$.

\cref{intro:forward_erg:trees} is stated for $\m_u$ but we prove it here more generally for all stationary Markov measures $\m$ on $\F_r$ so long as the boundary action $\F_r \actson^\beta (\partial \F_r, \P_\m)$ is \dfn{weakly mixing}, i.e., the product of $\beta$ with any ergodic pmp action is weakly mixing.

Due to Kaimanovich, and Glasner and Weiss, this holds for the uniform Markov measure $\m_u$, i.e.\ the boundary action of $\F_r$ on $(\partial \F_r,\P_{\m_u})$ is weakly mixing.
Indeed, the measure $\P_{\m_u}$ coincides with the Poisson measure of the simple symmetric random walk on $\F_r$, so by \cite[Corollary following Theorem 2.4.6]{Kaimanovich} the boundary action of $\F_r$ (for $r \ge 2$) on $(\partial \F_r, \P_{\m_u})$ is doubly ergodic. 
This implies that it is weakly mixing, by \cite[Theorem 1.1]{GW}.
See also \cite[Example 5.1]{GW}.

\begin{remark}
    The upcoming work of the authors is devoted to characterizing all Markov measures $\m$ on $\F_r$ that make the boundary action of $\F_r$ on $(\partial \F_r,\P_{\m})$ weakly mixing.
\end{remark}

Recall that $\@T_{\Symb_r}^\bullet$ denotes the set of all finite subtrees of the (left) Cayley graph of $\F_r$ containing the identity.

\begin{theorem}\label{forward_erg:trees}
Let $2\le r < \infty$ and let $\F_r \actson^\alpha (X,\mu)$ be a (not necessarily free) pmp action of $\F_r$. 
Let $\m$ be a stationary Markov measure on $\Symb_r^{<\N}$ whose support is contained in $\F_r$. 
Suppose that the boundary action $\F_r \actson^\beta (\partial \F_r, \P_\m)$ is weakly mixing.
Then for every $f \in L^1(X,\mu)$, for $\mu$-a.e.\ $x \in X$,
\[
\frac{1}{\m(\tree)}\sum_{w \in \tree} f(w\cdot x) \m(w) \to \finv(x) \;\text{ as }\; \m(\tree) \to \infty,
\]
where $\tree$ ranges over $\@T_{\Symb_r}^\bullet$ and $\finv$ is the conditional expectation of $f$ with respect to the $\sigma$-algebra of $\alpha$-invariant Borel sets.
\end{theorem}

\begin{remark}
Note that because we defined Markov measures $\m$ via a \textit{row} stochastic matrix $P$, we may think that $\m$ assigns weights to words in $\Symb_r^{<\N}$ from left to right (algorithmically), i.e., $\m(w \conc a) = \m(w) P(w_{n-1}, a)$ for each $w \in \Symb_r^n$, $a \in \Symb_r$, and $n \ge 1$.
On the other hand, in the left Cayley graph of $\F_r$, the words grow from right to left, i.e., the edges are between the words $w$ and $a \conc w$.
In light of this, the setup of \cref{forward_erg:trees} may seem unnatural.
However, because $\m$ is assumed to be \textit{stationary}, it can be also expressed by a right-to-left recursive formula $\m(a \conc w) = \tilde{P}(a, w_0) \m(w)$, where $\tilde{P}$ is a \textit{column} stochastic matrix defined by $\tilde{P}(a,b) \defeq \frac{\m(a)}{\m(b)} P(a,b)$, for which the column-vector $\pi \defeq (\m(a))_{a \in \Symb_r}$ is stationary, i.e., $\tilde{P} \pi = \pi$.
\end{remark}

The rest of this section is devoted to the proof of \cref{forward_erg:trees}.
Let $\pi$ and $P$ be the initial distribution (row-vector) and the (row stochastic) transition matrix of $\m$, respectively. 

We will obtain \cref{forward_erg:trees} by applying \cref{backward_erg:trees} to the transformation (so-called \dfn{backward system})
$T : X \times \partial\F_r \to X \times \partial\F_r$, defined by 
\[
(x,y) \mapsto (y(0)^{-1} \cdot x, \shift(y))
\]
where $\shift : \partial\F_r \to \partial\F_r$ is the shift map. We equip $X \times \partial\F_r$ with the measure $\nu \defeq \mu \times \P_\m$.
Intuitively, we direct the standard Cayley graph of $\F_r$ towards every end (or rather its inverse) and for each such directing, consider the shift map towards that end.
This transforms every tree in the Cayley graph into a union of $2r$ backward trees of the shift map, and each of these trees either is negligible or has the correct average.

For $i_0, \ldots, i_n < 2r$, we will also abuse notation and write $[i_0, \ldots, i_n]$ for $[a_{i_0}, \ldots, a_{i_n}]$, $\pi(i_0)$ for $\pi(a_{i_0})$, and $P(i_0,i_1)$ for $P(a_{i_0},a_{i_1})$.

\begin{lemma}
$T$ is measure-preserving.
\end{lemma}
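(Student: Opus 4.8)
The plan is to check that $T_\ast\nu=\nu$ by verifying $\nu(T^{-1}(B))=\nu(B)$ on the $\pi$-system of rectangles $B=A\times[w]$, where $A\subseteq X$ is Borel and $w\in I^{<\N}$ is a reduced word (including $w=\emptyset$); since such rectangles generate the product $\sigma$-algebra on $X\times\partial\F_r$ and both $\nu$ and $T_\ast\nu$ are probability measures, this suffices. The first step is to unwind the preimage explicitly. The condition $s(y)\in[w]$ forces $y=i\conc w\conc\cdots$ for a unique first letter $i\in I$, and for $y$ to lie in $\partial\F_r$ (a reduced word) we need $i\neq w_0^{-1}$ when $w\neq\emptyset$; on $[i\conc w]$ we have $y_0=i$, so $y_0^{-1}\cdot x\in A$ becomes $x\in i\cdot A$. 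Hence
\[
T^{-1}(A\times[w])=\bigsqcup_{i\in I,\ i\neq w_0^{-1}}(i\cdot A)\times[i\conc w],
\]
a countable disjoint union (so in particular $T$ is countable-to-one), the union being over all $i\in I$ when $w=\emptyset$.

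Next I would compute $\nu$ of this set. Using $\nu=\mu\times\P$ and the fact that $\alpha$ is pmp (so $\mu(i\cdot A)=\mu(A)$ for every $i\in I$),
\[
\nu\big(T^{-1}(A\times[w])\big)=\sum_{i\neq w_0^{-1}}\mu(i\cdot A)\,\P([i\conc w])=\mu(A)\sum_{i\neq w_0^{-1}}\P([i\conc w]).
\]
It remains to show $\sum_{i\neq w_0^{-1}}\P([i\conc w])=\P([w])$. For $w\neq\emptyset$, write $w=w_0\conc w'$; by $\cref{measure_of_shift}$ and the definition of $\P$ we have $\P([i\conc w])=\pi(i)\,\P_i([i\conc w_0\conc w'])=\pi(i)P(i,w_0)\,\P_{w_0}([w])$, while $\P([w])=\pi(w_0)\,\P_{w_0}([w])$. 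Therefore
\[
\sum_{i\neq w_0^{-1}}\P([i\conc w])=\Big(\sum_{i\neq w_0^{-1}}\pi(i)P(i,w_0)\Big)\P_{w_0}([w]).
\]
Here is the one point that requires a little care: the constraint $i\neq w_0^{-1}$ coming from the reduced-word condition lines up exactly with the vanishing entry $P(w_0^{-1},w_0)=0$ of the transition matrix (by the hypothesis that $P(a,b)=0$ iff $a=b^{-1}$), so the restricted sum equals the full sum $\sum_{i\in I}\pi(i)P(i,w_0)=(\pi P)(w_0)=\pi(w_0)$ by stationarity of $\pi$. This gives $\sum_{i\neq w_0^{-1}}\P([i\conc w])=\pi(w_0)\P_{w_0}([w])=\P([w])$, hence $\nu(T^{-1}(A\times[w]))=\mu(A)\P([w])=\nu(A\times[w])$. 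The case $w=\emptyset$ is immediate, since then $\sum_{i\in I}\P([i])=1$ because $\P$ is a probability measure with stationary (hence shift-invariant, by $\cref{shift_inv}$) distribution, so $\nu(T^{-1}(A\times\partial\F_r))=\mu(A)=\nu(A\times\partial\F_r)$.

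There is no substantial obstacle here: the argument is a direct computation once the preimage is described correctly. The only subtlety worth emphasizing in the write-up is the bookkeeping with the reduced-word constraint on $\partial\F_r$, which is precisely what makes the "missing" term in the stationarity identity $\pi P=\pi$ harmless, so that the pmp property of $\alpha$ combined with the stationarity of $\P$ yields invariance of $\nu$ under $T$.
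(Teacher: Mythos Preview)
Your proof is correct and follows essentially the same approach as the paper: decompose $T^{-1}(A\times[w])$ as a disjoint union indexed by the first letter, then use that $\alpha$ is pmp together with stationarity of $\P$. The only cosmetic difference is that the paper phrases the last step as $\sum_{i}\P([i\conc w])=\P(s^{-1}[w])=\P([w])$ via shift-invariance of $\P$, whereas you unwind this directly as $\pi P=\pi$; you are also slightly more explicit about the reduced-word constraint $i\neq w_0^{-1}$, which the paper handles implicitly since those cylinders are empty in $\partial\F_r$.
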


\begin{proof}
Let $U\subseteq X$ be Borel and $i_1, \ldots, i_n<2r$. Then for any $x \in X$, $i_0 < 2r$ and $y \in [i_0]$,
\begin{align*}
    (x,y) \in T^{-1}(U\times [i_1, \ldots, i_n]) 
    &\iff 
    (a_{i_0}^{-1}\cdot x,\shift(y))\in U \times [i_1, \ldots, i_n]
    \\
    &\iff
    (x,y) \in (a_{i_0} \cdot U) \times [i_0,i_1, \ldots, i_n],
\end{align*}
so $T^{-1}(U \times [i_1, \ldots, i_n]) = \bigsqcup_{i_0 < 2r} (a_{i_0} \cdot U) \times [i_0,i_1, \ldots, i_n]$. Hence,
\begin{align*}
    \nu(T^{-1}(U \times [i_1, \ldots, i_n])
    &= 
    \sum_{i_0<2r} \nu((a_{i_0} \cdot U) \times [i_0,i_1, \ldots, i_n])
    \\
    &=
    \mu(U) \cdot \sum_{i_0<2r} \P_\m([i_0, i_1, \ldots, i_n])
    \\
    &= 
    \mu(U) \cdot \P_\m(\shift^{-1}[i_1, \ldots, i_n])
    \\
    \eqcomment{$\m$ is stationary, so $\shift$ is pmp}
    &= 
    \nu(U \times [i_1, \ldots, i_n])
    .\qedhere
\end{align*}
\end{proof}

\begin{lemma}\label{same_equivalence_relation}
Let $\F_r \actson^\beta \partial \F_r$ be the boundary action of $\F_r$ as in \cref{subsec:boundary_actions}. Let $\F_r\actson^{\alpha\times\beta} X\times \partial \F_r$ be the diagonal action $g\cdot (x,y)=(g \cdot^\alpha x, g \cdot^\beta y)$.
Then the induced equivalence relation $E_{\alpha\times\beta}$ is the same as $E_T$ (the equivalence relation induced by $T$). 
\end{lemma}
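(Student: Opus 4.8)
The plan is to prove the two inclusions $E_T \subseteq E_{\alpha \times \beta}$ and $E_{\alpha \times \beta} \subseteq E_T$ separately, both holding literally (no null sets needed). The key point, already used implicitly in the discussion preceding the statement, is that for $y \in \partial G$ the shift satisfies $s(y) = y_0^{-1} \cdot^\beta y$, and therefore
\[
T(x,y) = \big(y_0^{-1} \cdot^\alpha x,\; y_0^{-1} \cdot^\beta y\big) = y_0^{-1} \cdot^{\alpha \times \beta} (x,y),
\]
i.e. one application of $T$ is just the diagonal action of the (point-dependent) generator $y_0^{-1}$. The inclusion $E_T \subseteq E_{\alpha\times\beta}$ is then immediate: the displayed identity gives $T(p) \in [p]_{E_{\alpha\times\beta}}$ for every $p \in X \times \partial G$, hence $T^n(p) \in [p]_{E_{\alpha\times\beta}}$ for all $n$ by induction, so $T^n(p) = T^m(q)$ forces $p$ and $q$ into a common $E_{\alpha\times\beta}$-class.

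For the reverse inclusion, I would first record, by induction on $n$ using the displayed identity, the formula
\[
T^n(x,y) = \big((y_0 y_1 \cdots y_{n-1})^{-1} \cdot^\alpha x,\; s^n(y)\big),
\]
where $y_0 \cdots y_{n-1}$ is the length-$n$ prefix of the (reduced) word $y$. Now fix $(x,y)$ and $g \in G$, written as a reduced word $g = g_0 g_1 \cdots g_{k-1}$, and let $j \in \{0,1,\dots,k\}$ be the length of the reduction that occurs when forming $g \cdot^\beta y = (g \conc y)^*$; thus $y_i = g_{k-1-i}^{-1}$ for $i < j$ and $g \cdot^\beta y = g_0 \cdots g_{k-1-j} \conc s^j(y)$, by the standard normal-form computation in the free group (the cases $j = 0$ and $j = k$ being degenerate). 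Substituting into the formula for $T^n$ and simplifying with the group identities $(y_0 \cdots y_{j-1})^{-1} = g_{k-j} \cdots g_{k-1}$ and $(g_0 \cdots g_{k-1-j})^{-1} g = g_{k-j} \cdots g_{k-1}$, one checks that both $T^j(x,y)$ and $T^{k-j}(g \cdot^\alpha x,\; g \cdot^\beta y)$ equal $\big(g_{k-j} \cdots g_{k-1} \cdot^\alpha x,\; s^j(y)\big)$. Hence $(x,y) \, E_T \, (g \cdot x, g \cdot y)$, which since $g$ was arbitrary gives $[(x,y)]_{E_{\alpha\times\beta}} \subseteq [(x,y)]_{E_T}$, as needed.

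The whole argument is elementary; the only thing requiring care is the bookkeeping of the cancellation length $j$ and the two degenerate cases ($j = 0$, where $g \cdot^\beta y = g \conc y$, and $j = k$, where $g^{-1}$ is a prefix of $y$ and $g \cdot^\beta y = s^k(y)$). No measure-theoretic subtleties arise since every identity above is pointwise, so this lemma --- unlike most statements in the paper --- holds with no ``modulo null'' caveat.
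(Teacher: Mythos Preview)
Your argument is correct, and the first inclusion ($E_T \subseteq E_{\alpha\times\beta}$) is exactly what the paper does: both rely on the identity $T(x,y) = y_0^{-1}\cdot^{\alpha\times\beta}(x,y)$.

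For the reverse inclusion the paper takes a shorter route. Rather than handling an arbitrary reduced word $g$ and tracking the cancellation length $j$, it simply checks the claim for a single generator $a\in I$: if $a\neq y_0^{-1}$ then $a\cdot^\beta y = a\conc y$ and $T(a\cdot(x,y)) = (x,y)$, while if $a = y_0^{-1}$ then $a\cdot(x,y) = T(x,y)$. In either case $a\cdot(x,y)\,E_T\,(x,y)$, and since $I$ generates $G$ this already gives $E_{\alpha\times\beta}\subseteq E_T$. Your explicit computation $T^j(x,y) = T^{k-j}(g\cdot(x,y))$ is of course equivalent and perfectly valid---it just unwinds the induction that the generator argument leaves implicit---so the difference is purely one of economy, not of substance.
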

\begin{proof}
Fix $(x,y)\in X \times \partial \F_r$. Then for $a \in \Symb_r$, if $a \neq y(0)^{-1}$, then $T(a \cdot (x,y)) = T(a \cdot x, a \conc y) = (x,y)$. If $a=y(0)^{-1}$, then $T(x,y)=(a\cdot x,\shift(y))= a \cdot (x,y)$.
On the other hand, $T(x,y)=(y(0)^{-1}x,\shift(y)) = y(0)^{-1} \cdot (x,y)$.
\end{proof}

Given that $E_T$ is the orbit equivalence relation of the diagonal action and the action of $\F_r$ on $X$ is pmp, the following lemma is clear, but we state it with an explicit formula for the Radon--Nikodym cocycle.

\begin{lemma}
$E_T$ is null-preserving on the conull set $X \times Y$, where $Y$ is the set of all infinite words $y \in \Symb_r$ in which each pair $y(n), y(n+1)$ of consequent letters has $P(y(n), y(n+1)) > 0$.
The Radon--Nikodym cocycle of $E_T \rest{X \times Y}$ with respect to $\nu$ only depends on the $Y$-coordinates and is equal to the Radon--Nikodym derivative of $E_\shift \rest{Y}$ with respect to $\P_\m$; more explicitly (by \cref{correctcocycle}):
\[
\coc_{T^n(x,y)}((x,y)) = \coc_{\shift^n(y)}(y) = \frac{\m \big(y(0) \conc y(1) \conc \dots \conc y(n)\big)}{\m \big(y(n)\big)}.
\]
\end{lemma}

\begin{proof}
By the cocycle identity, it is enough to prove
\[
\coc_{T(x,y)}((x,y)) = \frac{\pi\big(y(0)\big)}{\pi\big(y(1)\big)} P\big(y(0), y(1)\big).
\]
For this, it suffices to check that for Borel sets $U\subseteq X$ and $[i_0, \ldots, i_n]\subseteq \partial \F_r$, 
\[
\nu(T(U\times [i_0, \ldots, i_n]))=\int_{U\times [i_0, \ldots, i_n]} \frac{\pi(i_1)}{\pi(i_0)P(i_0,i_1)} \;  d\nu.
\]
To see this, observe that 
\begin{align*}
    \int_{U\times [i_0, \ldots, i_n]} \frac{\pi(i_1)}{\pi(i_0)P(i_0,i_1)} \;  d\nu 
    &= 
    \frac{\pi(i_1)}{\pi(i_0)P(i_0,i_1)}\cdot\nu(U\times [i_0, \ldots, i_n])
    \\
    &= 
    \frac{\pi(i_1)}{\pi(i_0)P(i_0,i_1)} \cdot \mu(U) \cdot \frac{\pi(i_0)P(i_0,i_1)}{\pi(i_1)} \cdot \P[i_1, \ldots, i_n]
    \\
    &= 
    \mu(U)\cdot\P_\m[i_1, \ldots, i_n]
    \\
    &= 
    \mu(a_{i_0} \cdot U) \cdot \P_\m[i_1, \ldots, i_n]
    \\
    &= 
    \nu(T(U\times [i_0, \ldots, i_n]))
    .\qedhere
\end{align*}
\end{proof}

\begin{lemma}\label{X-ergodic=>XxY-ergodic}
If $\F_r\actson^\alpha (X,\mu)$ is ergodic, then $T$ is ergodic.
\end{lemma}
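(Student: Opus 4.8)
The plan is to deduce ergodicity of $T$ (equivalently, of $E_T$, equivalently by \cref{same_equivalence_relation} of $E_{\alpha \times \beta}$) from ergodicity of $G \curvearrowright^\alpha X$ together with ergodicity of the boundary measure $\P$ under the shift $s$ (equivalently, under $E_s = E_\beta$), the latter being available from \cref{ergodicity_of_Markov_chain} since an irreducible Markov chain with a stationary distribution is positive recurrent (\cref{tfae:stationary}), hence recurrent, hence shift-ergodic. So the real content is: the product of an ergodic pmp action with the ergodic boundary action is ergodic, which is a relative-ergodicity / mixing-type statement rather than a formal consequence of the two factors being ergodic (products of ergodic systems need not be ergodic in general). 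The key extra input is that the boundary action is not merely ergodic but \emph{weakly mixing}-like from the point of view of $G$: because the Markov chain is irreducible, the boundary action has a strong asymptotic independence property coming from \cref{Markov_property} (the Markov property) and \cref{proportionality_of_invariant_sets}.

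First I would reduce to the following statement: if $A \subseteq X \times \partial G$ is $E_{\alpha \times \beta}$-invariant with $\nu(A) > 0$, then $\nu(A) = 1$. Fix such an $A$ and $\e > 0$. The plan is to use a Lebesgue-density argument in the $\partial G$ coordinate: by Fubini, the slices $A_x \defeq \set{y : (x,y) \in A}$ are defined for a.e.\ $x$, and $\nu(A) = \int_X \P(A_x)\, d\mu(x)$. Since $A$ is invariant under $g \mapsto (g\cdot^\alpha x, g\cdot^\beta y)$, for each $g \in G$ and a.e.\ $x$ we have $A_{g \cdot x} = g \cdot^\beta A_x$; in particular $\P(A_{g\cdot x}) = \P(g\cdot^\beta A_x)$. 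Now I would argue that for a.e.\ $x$, the set $A_x$ is, modulo $\P$-null, shift-invariant \emph{up to the $G$-action}: more precisely, using $s(y) = y_0^{-1}\cdot^\beta y$ and invariance of $A$, one gets that the function $x \mapsto \P(A_x)$ is $T$-invariant in a suitable sense, hence $\mu$-a.e.\ constant once we know $T$ acts ergodically on the first coordinate after integrating out $\partial G$ — but that is circular, so instead I would take the more hands-on route below.

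The hands-on route: pick, via the Lebesgue density lemma (\cref{density}) applied to $\nu$ on $X \times \partial G$ together with an exhaustion of $X$ by a countable basis, a positively-measured rectangle $U \times [w]$ with $\frac{\nu(A \cap (U\times[w]))}{\nu(U\times[w])} > 1 - \e$. Applying the transformation $T^{|w|}$ (equivalently, acting by $w^{-1} \in G$) and using invariance of $A$ plus the Markov property (\cref{Markov_property}) to push the cylinder $[w]$ down to a single state $[w_{|w|-1}]$, I obtain a set $U' \times [i]$ (with $U' = w^{-1}\cdot^\alpha U$ a positively-measured subset of $X$, since $\alpha$ is pmp) on which $A$ has $\P_i \times (\text{normalized }\mu|_{U'})$-density $> 1 - \e$. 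Next, using \cref{recurrence_is_transitive} and \cref{proportionality_of_invariant_sets} — which tell us that for a $\P$-invariant set the conditional measures $\P_j$ of its shift-saturation agree across all states $j$ — I would upgrade this to: for a $1-O(\e)$ fraction of $x \in U'$, the slice $A_x$ has $\P$-measure $> 1 - O(\e)$, i.e.\ $A$ is $\nu$-almost-full over a positively-measured piece $V \subseteq X$ of the base. Finally, since $\alpha$ is ergodic, $\bigcup_{g \in G} g \cdot^\alpha V$ is conull in $X$, and pushing the "$A$ is almost-full over $V$" statement around by the $G$-action (which preserves $A$ and, being pmp on $X$, controls measures) shows $A$ is $\nu$-almost-full over a conull subset of the base, hence $\nu(A) > 1 - O(\e)$; letting $\e \to 0$ gives $\nu(A) = 1$.

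I expect the main obstacle to be the middle step: carefully transferring the density bound from the product conditional measure on $U' \times [i]$ to density of the \emph{individual fibers} $A_x$ for most $x$, while correctly invoking \cref{proportionality_of_invariant_sets} — note that $A_x$ itself need not be exactly $E_s$-invariant for a fixed $x$ (only the union $A$ is invariant under the diagonal $G$-action), so the cleanest formulation is probably to work throughout with the $\nu$-measure of $A$ restricted to rectangles and let the $G$-action do the bookkeeping, rather than fixing fibers too early. An alternative, possibly cleaner, organization would be to observe that $E_T = E_s \vee (\text{diagonal }G\text{-orbit equivalence})$ and cite a known "ergodicity is inherited by joins with an ergodic pmp hyperfinite-by-action factor" principle, but since the excerpt does not provide such a black box, I would carry out the density argument above directly.
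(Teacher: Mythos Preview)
Your high-level diagnosis is correct: ergodicity of the product is not automatic from ergodicity of the two factors, and what is really needed is a mixing-type property of the boundary action $\beta$. However, the hands-on density argument you sketch has two genuine gaps, and the paper takes a much shorter route.

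\textbf{The gaps in your argument.} First, in the ``upgrade'' step you invoke \cref{proportionality_of_invariant_sets}, but that lemma applies only to $E_s$-invariant sets, and the individual fibers $A_x$ are \emph{not} $E_s$-invariant: invariance of $A$ under the diagonal action only gives $A_{g\cdot^\alpha x} = g\cdot^\beta A_x$, which relates \emph{different} fibers. You flag this yourself, but the proposed fix (``work with $\nu$-measures of rectangles'') does not resolve it, because you still need to pass from $\P_i(A_x)$ being large to $\P(A_x)$ being large, and nothing in your toolkit does that for a non-invariant $A_x$. Second, and more seriously, the final step (``push the almost-fullness around by the $G$-action'') fails: if $x \in V$ with $\P(A_x) > 1-\e$ and $x' = g\cdot^\alpha x$, then $A_{x'} = g\cdot^\beta A_x$, and since $\beta$ is \emph{not} $\P$-preserving, $\P(A_{x'})$ can be arbitrarily small. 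The parenthetical ``being pmp on $X$, controls measures'' is misleading: $\alpha$ being pmp controls $\mu$-measures in the base, not $\P$-measures in the fibers. So ergodicity of $\alpha$ alone cannot propagate fiber-fullness, and your argument stalls here.

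\textbf{What the paper does instead.} The paper observes that all Markov measures $\P$ satisfying the hypotheses ($\pi(a)>0$ and $P(a,b)=0$ iff $a=b^{-1}$) are mutually absolutely continuous with the uniform Markov measure $\P'$ of \cref{r<infty}, so ergodicity of $E_{\alpha\times\beta}$ for $\mu\times\P$ is equivalent to ergodicity for $\mu\times\P'$. Then it simply \emph{cites} that the boundary action $\F_r\curvearrowright(\partial\F_r,\P')$ is weakly mixing (Glasner--Weiss, Kaimanovich). By definition, weak mixing of a nonsingular action $\beta$ means precisely that $\alpha\times\beta$ is ergodic for every ergodic pmp $\alpha$, so the lemma follows immediately. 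What you were attempting is, in effect, a from-scratch proof of weak mixing of $\beta$ via the Markov property; this is possible but is a separate theorem, not a short lemma, and your sketch does not get there.
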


\begin{proof}
By \cref{same_equivalence_relation}, it is enough to show $E_{\alpha\times\beta}$ is ergodic, but this follows from the hypothesis that the boundary action of $\F_r$ on $(\partial \F_r,\P_{\m})$ is weakly mixing.
\end{proof}

\begin{lemma}\label{conditional_exp_coincide}
For $f \in L^1(X,\mu)$, define $F \in L^1(X \times \partial \F_r, \mu\times \P_\m)$ by $F(x,y) \defeq f(x)$. Then for $\mu$-a.e.\ $x \in X$ and $\#P_\m$-a.e.\ $y \in \partial \F_r$, $\overline{F}(x,y) = \finv(x)$, where $\finv$ and $\overline{F}$ are the conditional expectations of $f$ and $F$ with respect to the $\sigma$-algebras of $\alpha$ and $T$-invariant Borel sets, respectively.
\end{lemma}

\begin{proof}
If the action $\F_r \actson^\alpha (X,\mu)$ is ergodic, then so is $T$ by \cref{X-ergodic=>XxY-ergodic}, and hence $\finv[F](x,y)= \int_{X \times \partial \F_r} F \; d \nu = \int_X f(x) \; d\mu(x) = \finv(x)$. 

In the general case, we use the ergodic decomposition theorem for pmp countable Borel equivalence relations (see \cref{defn:ergodic_dec} and the following paragraph), which gives us an $E_\alpha$-ergodic decomposition $\epsilon : X \to P(X)$. By \cref{erg_dec--cond_exp}, $\finv(x) = \int_X f \;d\epsilon_x$ for $\mu$-a.e.\ $x \in X$. 

But $\int_X f \;d\epsilon_x = \int_{X \times \partial \F_r} F \;d (\epsilon_x \times \#P_\m)$ by the definition of $F$, so it remains to show that $\finv[F](x,y) = \int_{X \times \partial \F_r} F \;d (\epsilon_x \times \#P_m)$ for a.e.\ $x \in X$ and $\#P_\m$-a.e.\ $y \in \partial \F_r$.

To this end, for any $H \in L^1(X \times \partial \F_r, \mu \times \#P)$, Fubini's theorem gives
\begin{align*}
\int_{X \times \partial \F_r} H \;d (\mu \times \#P_\m) 
&= 
\int_X \int_{\partial \F_r} H(x,y) \;d\#P_\m(y) \;d\mu(x)
\\
\eqcomment{\cref{erg_dec--cond_exp}}
&= 
\int_X \int_X \int_{\partial \F_r} H(z,y) \;d\#P_\m(y) \;d\epsilon_x(z) \;d\mu(x)
\\
\eqcomment{Fubini}
&=
\int_X \int_{X \times \partial \F_r} H(z,y) \;d(\epsilon_x \times \#P_\m)(z,y) \;d\mu(x)
\\
\eqcomment{dummy integration}
&=
\int_{X \times \partial \F_r} \int_{X \times \partial \F_r} H(z,y) \;d(\epsilon_x \times \#P_\m)(z,y) \;d(\mu \times \#P_\m)(x,y').
\end{align*}
Recalling, in addition, that each measure $\epsilon_x \times \#P_\m$ is $T$-ergodic, by \cref{X-ergodic=>XxY-ergodic}, we see that the map $(x,y) \mapsto \epsilon_x \times \#P_\m$ is the $E_T$-ergodic decomposition of $\mu \times \#P_\m$ (\cref{defn:ergodic_dec}). Thus, again by \cref{erg_dec--cond_exp}, $\finv[F](x,y) = \int_{X \times \partial \F_r} F \;d(\epsilon_x \times \#P_\m)$ for $(\mu \times \#P_\m)$-a.e.\ $(x,y) \in X \times \partial \F_r$, finishing the proof.
\end{proof}

We now show that \cref{backward_erg:trees} applied to $T$ yields \cref{forward_erg:trees}:

\begin{proof}[Proof of \cref{forward_erg:trees}]
For each $a \in S_r$, define the partial function $\^a : X \times \partial \F_r \partialto X \times \partial \F_r$ by $\^a(x,y) \defeq (a \cdot x, a \conc y)$ for $(x,y) \in \dom(\^a) \defeq X \times (\partial \F_r \setminus \bigcup_{\substack{b \in \Symb_r \\ P(a,b) = 0}} [b])$.
Let $\^\Symb_r \defeq \set{\^a : a \in \Symb_r}$, and notice that for each $(x,y) \in X \times \partial \F_r$,
\[
T^{-1}(x,y) = \set{(a_i \cdot x, a_i \conc y) : i < 2r \text{ and } a_i\neq y(0)^{-1}} = \^\Symb_r \cdot (x,y),
\]
where $\^\Symb_r \cdot (x,y)$ is defined as in \cref{subsec:right-inverses}.
Thus, $\^\Symb_r$ is a complete set of Borel partial right inverses of $T$.
For a reduced word $w \defeq s_1 s_2 \dots s_n$ with $s_i \in \Symb_r$, we let $\^w \defeq \^{s_1} \circ \^{s_2} \circ \dots \circ \^{s_n}$ be the induced partial function on $X \times \partial \F_r$.
Lastly, we put $\^\tree \defeq \set{\^w : w \in \tree}$ for each $\tree \in \@T_{\Symb_r}^\bullet$.

Fix $f \in L^1(X,\mu)$. 
Define $F \in L^1(X \times \partial \F_r, \nu)$ by $F(x,y) \defeq f(x)$. 
For $\tree \in \@T_{\Symb_r}^\bullet$, and $(x,y) \in X \times \partial \F_r$, we will abuse notation and write $\coc_{y_2}(y_1)$ for $\coc_{(x_2,y_2)}((x_1,y_1))$, as well as $\coc_y(\^\tree \cdot y)$ for $\coc_{(x,y)}(\^\tree \cdot (x,y))$, since $\coc$ does not depend on $x$. 

\begin{claim}\label{weighted_sums_over_tree}
For each $\tree \in \@T_{\Symb_r}^\bullet$, $x \in X$, and $y_i \in [i]$ where $i$ ranges in $\set{0, \dots, 2r-1}$,
\begin{enumerate}[(a)]
\item\label{weight_of_tree} $\m(\tree) = \sum_{i<2r} \pi(i) \coc_{y_i}(\^\tree \cdot y_i)$.

\item\label{weighted_sum} $
\sum_{w \in \tree} f(w \cdot x) \m(w) 
= 
\sum_{i < 2r} A_F^\coc[\^\tree \cdot (x,y_i)] \pi(i) \coc_{y_i}(\^\tree \cdot y_i).
$
\end{enumerate}
\end{claim}

\begin{pf}
Part \labelcref{weight_of_tree} follows from \labelcref{weighted_sum} by plugging-in $f \defeq 1$.
As for \labelcref{weighted_sum}, we compute: 
\begin{align*}
\sum_{i < 2r} \pi(i) \coc_{y_i}(\^\tree \cdot y_i) A_F^\coc[\^\tree \cdot (x,y_i)]
&= 
\sum_{i < 2r} \pi(i) \sum_{\substack{w \in \tree, \\ P(w_{|w|-1}, i)  > 0}} f(w \cdot x) \coc_{y_i}(w \conc y_i) 
\\
\eqcomment{$m(w \conc i) = 0$ when $P(w_{|w|-1}, i) =0$}
&= 
\sum_{i<2r} \pi(i) \sum_{w \in \tree} f(w \cdot x) \frac{\m(w \conc i)}{\pi(i)}
\\
\eqcomment{Fubini}
&= 
\sum_{w \in \tree}f(w\cdot x)\sum_{i<2r}\m(w \conc i)
\\
&= 
\sum_{w \in \tree} f(w \cdot x) \m(w)
.\qedhere
\end{align*}
\end{pf}

Applying \cref{backward_erg:trees,maximal_ergodic} to the transformation $T$, we get that the conclusions of these theorems hold for $\nu$-a.e.\ $(x,y)\in X\times  \partial \F_r$.
Thus, for each point $x$ in a $\mu$-conull set $X' \subseteq X$ and for each $i < 2r$, there is $y_i \in  \partial \F_r \cap [i]$ such that the conclusions of \cref{backward_erg:trees,maximal_ergodic} hold at $(x, y_i)$. 
Fix $x \in X'$ and $(y_i)_{i < 2r}$ as above, as well as $\e > 0$.
Then the choices of $x$ and $(y_i)_{i < 2r}$ (namely, \cref{backward_erg:trees,maximal_ergodic}), and the fact that $r < \infty$, yield an $M > 0$ large enough so that for each $i<2r$ and $\tree_{(x,y_i)} \in \@T_{(x,y_i)}$:
\begin{enumerate}[(i)]
    \item $|\finv(x)| \le M$ and $|A_f^\coc[\tree_{(x,y_i)}]| \le M$,

    \item $A_f^\coc[\tree_{(x,y_i)}] \approx_{\pi(i) \e} \overline{F}(x,y_i)$ whenever $\coc_{y_i}(\tree_{(x,y_i)}) \ge M$.
\end{enumerate}
It remains to show $\frac{1}{\m(\tree)} \sum_{w \in \tree}  f(w \cdot x) \m(w) \approx_\e \finv(x)$ for each $\tree \in \@T_{\Symb_r}^\bullet$ with $\m(\tree) > \frac{2 M^2}{\e}$.

\begin{claim}\label{average_vs_cond-exp}
$A_F^\coc[\^\tree \cdot (x,y_i)] \frac{\pi(i) \coc_{y_i}(\^\tree \cdot y_i)}{\m(\tree)} \approx_{\pi(i) \e} \overline{F}(x, y_i) \frac{\pi(i) \coc_{y_i}(\^\tree \cdot y_i)}{\m(\tree)}$ for each $i < 2r$.   
\end{claim}

\begin{pf}
For each $i < 2r$, there are two cases.

If $\coc_{y_i}(\^\tree \cdot y_i) < M$, then $\frac{\pi(i) \coc_{y_i}(\^\tree \cdot y_i)}{\m(\tree)} < \frac{\pi(i) M \e}{2 M^2} = \frac{\pi(i) \e}{2 M}$, and both $|A_F^\coc[\^\tree \cdot (x,y_i)]|$ and $|\overline{F}(x, y_i)|$ are at most $M$, so both quantities in question are less than $\frac{\pi(i) \e}{2}$ and hence within $\pi(i) \e$ of each other.

If $\coc_{y_i}(\^\tree \cdot y_i) \ge M$, then $A_F^\coc[\^\tree \cdot (x,y_i)] \approx_{\pi(i) \e} \overline{F}(x,y_i)$, so we are done because $\frac{\pi(i) \coc_{y_i}(\^\tree \cdot y_i)}{\m(\tree)} \le 1$ by \cref{weighted_sums_over_tree}\labelcref{weight_of_tree}.
\end{pf}

It remains to compute, using \cref{weighted_sums_over_tree}\labelcref{weighted_sum} on the first line:
\begin{align*}
\frac{1}{\m(\tree)} \sum_{w \in \tree}  f(w \cdot x) \m(w)
& =
\sum_{i < 2r} A_F^\coc[\^\tree \cdot (x,y_i)] \frac{\pi(i) \coc_{y_i}(\^\tree \cdot y_i)}{\m(\tree)}
\\
\eqcomment{\cref{average_vs_cond-exp}}
& \approx_\e
\sum_{i<2r} \overline{F}(x, y_i) \frac{\pi(i) \coc_{y_i}(\^\tree \cdot y_i)}{\m(\tree)} 
\\
\eqcomment{\cref{conditional_exp_coincide}}
& = 
\finv(x) \sum_{i<2r} \frac{\pi(i) \coc_{y_i}(\^\tree \cdot y_i)}{\m(\tree)}
\\
\eqcomment{\cref{weighted_sums_over_tree}\labelcref{weight_of_tree}}
& =
\finv(x) 
.\qedhere
\end{align*}
\end{proof}


\def\MR#1{}
\bibliographystyle{amsalpha} 
\bibliography{ref} 


\end{document}